\newcommand{\C}{\mathbb{C}}
\newcommand{\N}{\mathbb{N}}
\newcommand{\Z}{\mathbb{Z}}
\newtheorem{proposition}{Proposition}[section]
\newtheorem{defin}{Definition}[section]
\newtheorem{theorem}[defin]{Theorem}
\newtheorem{exa}{Example}[section]
\newenvironment{example}{\begin{exa}\rm}{\end{exa}}
\newtheorem{exas}{Examples}
\newtheorem{lemma}[defin]{Lemma}
\newtheorem{corollary}[defin]{Corollary}
\newenvironment{proof}
{\noindent{\it Proof.}}{\hfill $\Box$\par\vspace{2.5mm}}
\newtheorem{remark}[defin]{Remark}
\numberwithin{equation}{section}
\title{Difference ``abc" theorem for entire functions
 and Difference analogue of truncated version of Nevanlinna second main theorem}
\author{Rui-Chun Chen~
 and Zhi-Tao Wen\footnote{Wen is the corresponding author and supported  by the National Natural Science Foundation of China (No.~12471076) and LKSF STU-GTIIT Joint-research Grant (No. 2024LKSFG06).
} }
\date{}
\begin{document}
\maketitle

\begin{abstract}
In this paper, we focus on the difference analogue of the Stothers-Mason theorem for entire functions of order less than 1, which can be seen as difference $abc$ theorem for entire functions. We also obtain the difference analogue of truncated version of Nevanlinna second main theorem which reveals that a subnormal meromorphic function $f(z)$ such that $\Delta f(z)\not\equiv 0$ cannot have too many points with long length in the complex plane. Both theorems depend on new definitions of the length of poles and zeros of a given meromorphic function in a domain. As for the application, we consider entire solutions of difference Fermat functional equations.

\medskip
\noindent
\textbf{Keyword}: length of zeros, length of poles, Stothers--Mason theorem, subnormal.

\medskip
\noindent
\textbf{2020MSC}: 30D35, 30D20
\end{abstract}

\section{Introduction}
Let $a,b,c$ be nonzero relatively prime integers such that $a+b=c$,
and let $\text{rad}(abc)$ be the product of distinct prime numbers dividing $abc$. Oesterl\'e posed the question whether the numbers
    $$
    L=L(a,b,c)=\frac{\log\max(|a|,|b|,|c|)}{\log\text{rad}(abc)}
    $$
are bounded. This question was refined by Masser and Oesterl\'e who conjectured that for each $\varepsilon>0$ there exists a positive constant $K(\varepsilon)$ such that
    $$
    \max(|a|,|b|,|c|)\leq K(\varepsilon)\text{rad}(abc)^{1+\varepsilon}.
    $$
This is the well-known $abc$-conjecture, see \cite{Oesterle1988}. Lang in \cite[Page 196]{Lang} said that
\begin{quote}
\emph{
One of the most fruitful analogies in mathematics is that between the
integers $\Z$ and the ring of polynomials $F[t]$ over a field $F$.}
\end{quote}
The Stothers--Mason theorem is the polynomial analogue of $abc$-conjecture.
Let $P$ be a polynomial. The radical ${\rm rad}(P)$ is the product of distinct linear factors of $P$.
Let $a$, $b$ and $c$ be relatively prime polynomials such that not all of them are identically zero. The Stothers--Mason theorem~\cite{mason1984},~\cite{stothers1981}, see also e.g.,~\cite{Snyder} states that if they satisfy $a+b=c$, then
    $$
    \max\{\deg(a),\deg(b),\deg(c)\}\leq \deg({\rm rad}(abc))-1.
    $$

A difference analogue of the Stothers-Mason theorem or difference $abc$ theorem for polynomials was given by Ishizaki et al. in \cite[Theorem~3.1]{IKLT}. Let $a,b$ and $c$ be relatively prime polynomials in $\C[z]$ such that $a+b=c$ and such that $a,b$ and $c$ are not all constants. Then,
    $$
    \max\{\deg(a),\deg(b),\deg(c)\}\leq \tilde{n}_\kappa(a)+\tilde{n}_\kappa(b)+\tilde{n}_\kappa(c)-1,
    $$
where $\kappa\in\C\setminus\{0\}$, and
    $$
    \tilde{n}_\kappa(p)=\sum_{w\in\C}(\text{ord}_w(p)-\min\{\text{ord}_w(p),\text{ord}_{w+\kappa}(p)\})
    $$
for a polynomial $p$ by $\text{ord}_w(p)$ denoting the order of zeros of $p$ at $z=w$.

Another difference analogue of the Stothers-Mason theorem was given by Ishizaki and Wen in \cite[Theorem~3.5]{IW2022}. Let $a, b$ and $c$ be relatively shifting prime polynomials, see \cite[pages~736-737]{IW2022}, such that $a+b=c$ and such that $a, b$ and $c$ are not all constants. Then
    \begin{equation}\label{diffabc.eq}
    \max\{\deg(a),\deg(b),\deg(c)\}\leq \deg(\text{rad}_\Delta(abc))-1,
    \end{equation}
where $\text{rad}_\Delta(abc)$ is difference radical of $abc$ defined in \cite[Section~3]{IW2022}.

The Stothers-Mason theorem shows that the maximum degree of $a,b,c$ can be controlled by the number of distinct zeros of $abc$. The difference analogue of the Stothers-Mason theorem
reveals that the maximum degree of $a,b,c$ can be controlled by the number of distinct lines of zeros of $abc$ (we set two points $z_1,z_2$ are lying in one line if $z_1=z_2\pm1$, but two points $z_3,z_4$ are in two different lines even if they are $z_3=z_4$).
A meromorphic function's Nevanlinna characteristic function is dominated by the distribution of distinct zeros of $f-a_j$ for $a_j\in\C$ according to Nevanlinna second main theorem. It is nature to consider whether a meromorphic function's Nevanlinna characteristic function can be dominated by the distribution of distinct lines of zeros of $f-a_j$ for $a_j\in\C$. In order to study this problem, we give new definitions of length of poles and length of zeros, which are difference analogues of multiplicity of poles and multiplicity of zeros,
see Section 3 and Section 4.

Our purpose of this paper is to generalize the inequality \eqref{diffabc.eq} for entire functions of order less than 1 and give a difference analogue of truncated version of Nevanlinna second main theorem by new definitions of length of poles and length of zeros. In Section 2, we introduce a new definition of length of poles of a given meromorphic function in a domain. We state the difference analogue of Stothers-Mason theorem for entire functions of order less than 1 in Section 3, which can be seen as difference ``abc" theorem for entire functions, while we also generalize the result for $m+1$ entire functions of order less than 1. In Section 4, we give the difference analogue of truncated version of Nevanlinna second main theorem. It reveals that subnormal meromorphic functions such that $\Delta f\neq 0$ cannot have too many points with long length. We consider difference Fermat functional equations in Section 5, which is an application of the difference analogue of Stothers-Mason theorem for entire functions of order less than 1.

\section{Length of poles}

For a function $f$, we denote by $\Delta f(z)=f(z+1)-f(z)$ the difference operator. Let $n$ be a nonnegative integer. Define $\Delta^n f(z)=\Delta(\Delta^{n-1} f(z))$ for $n\geq 1$, and write $\Delta^0 f=f$.
Define $z^{\underline{0}}=1$ and
    $$
    z^{\underline{n}}=z(z-1)\cdots(z-n+1)=n!\begin{pmatrix}
    z\\
    n
    \end{pmatrix},\quad n=1, 2, 3, \dots,
    $$
which is called a \emph{falling factorial}, see \cite[Page 25]{M-Thomson1933} or
\cite[Subsection 2.1]{Ishizaki-Wen}.
Consider the formal series of the form
    $$
    Y_1(z)=\sum_{n=0}^\infty \tilde{a}_n z^{\underline{n}},\quad \tilde{a}_n\in\mathbb C,\quad n=0,1,2, \dots,
    $$
which is called the \emph{binomial series} or the \emph{factorial series}.
More information on convergence of binomial series in connection with
the order of growth of entire function can be found in \cite{Ishizaki-Wen}.
Following the view of Boole~\cite[Page 7]{Boole}, we adopt the notation
    \begin{equation*}\label{-n.eq}
    z^{\underline{-n}}=\frac{1}{z(z+1)\cdots(z+n-1)},\quad~n=1,2,\ldots.
    \end{equation*}
It yields that
    \begin{equation*}\label{diff-minus.eq}
    \Delta z^{\underline{-n}}=(z+1)^{\underline{-n}}-z^{\underline{-n}}=-nz^{\underline{-(n+1)}},
    \end{equation*}
which is an analogue of derivative of inverse power series $(z^{-n})'=-nz^{-(n+1)}$, see e.g.,~\cite[Page 5]{Norlund1924}.
We add that $z^{\overline{n}}=z(z+1)\cdots(z+n-1)$ is called a \emph{raising factorial}, which coincides with the Pochhammer symbol~\cite[Page 171]{Pochhammer1893} denoted by $(z)_{n}$ in this paper, see e.g.,~\cite[Page 143]{Kohno1999},~\cite{Weniger2010}, and note that $z^{\underline{-n}}$ is characterized by $z^{\underline{-n}}=1/z^{\overline{n}}=1/(z)_{n}=\Gamma(z)/\Gamma(z+n)$

Let $f$ be meromorphic in a domain $G\subset\C$ and let $n\in\N$.
Suppose that $z_0, z_0-1,\ldots, z_0-n\in G$.
The point $z_0$ is a \emph{pole of $f(z)$ with length $n$ in $G$} provided $z_0-k$ are poles of $f(z)$ for every $k\in\N$ such that $0\leq k<n$, but $z_0-n$ is not the pole of $f(z)$ .
In particular, if $z_0$ is a pole of $f$ with length 1, that is, $z=z_0$ is the pole of $f$ but $z=z_0+1$ is not, then we say $z_0$ is a pole of $f$ with \emph{simple length}.
Note here that length of poles is a difference analogue
of multiplicity of poles.

\begin{example}
The function $f(z)=2^z=e^{z\ln 2}$ has no pole, whose length of every pole is 0. Let $g$ be defined as $g(z)=1/(z^2(z+1)(z+2))2^z$, which can be written
    $$
   g(z)=z^{\underline{-3}}\cdot z^{\underline{-1}}2^z=(z+1)^{\underline{-2}}\cdot (z^{\underline{-1}})^22^z.
    $$
Then $z=0$ is a pole of $g$ with length $3$ in $\C$. In addition, $z=-1$ and $z=-2$ are the poles of $g$ with length 2 and 1 in $\C$, respectively.
\end{example}

The length of a given pole of a meromorphic function in a domain $G$ can be infinite. For example, we consider the Euler Gamma function $\Gamma(z)$. It is known that $\Gamma(z)$ is a transcendental meromorphic function that has simple poles at $0, -1, -2, \dots$. It gives that for any $n\in\N$, $z=-n$ is a pole of $\Gamma(z)$ with infinite length in $\C$, see e.g.,~\cite[Page 236]{WW1927}.

\begin{proposition}
If $z_0$ is a pole of a meromorphic function with infinite length in $\C$, then the order of growth of $f$ is $\rho(f)\geq 1$.
\end{proposition}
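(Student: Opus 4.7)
The plan is to translate the hypothesis into a lower bound on the integrated counting function of poles and then feed this into the standard definition of the order of growth. By the definition of length, $z_0$ being a pole of infinite length in $\C$ means precisely that every point of the form $z_0-k$, $k\in\N\cup\{0\}$, is a pole of $f$. These points form a one-sided arithmetic progression on the real-translated line through $z_0$ with common gap $1$.

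First I would bound the unintegrated counting function $n(r,f)$ from below. For $r$ large enough (say $r\geq |z_0|+1$), all poles $z_0-k$ with $0\le k\le r-|z_0|$ lie in the closed disk $\{|z|\le r\}$, so
\[
n(r,f)\ \geq\ r-|z_0|-1.
\]
Next I would pass to $N(r,f)$ by the usual formula
\[
N(r,f)=\int_0^r\frac{n(t,f)-n(0,f)}{t}\,dt+n(0,f)\log r,
\]
and estimate the integral from the interval $[|z_0|+1,r]$, which immediately yields a bound of the form $N(r,f)\geq r-C\log r-C'$ for some constants $C,C'>0$ and all sufficiently large $r$.

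Finally, since $T(r,f)\geq N(r,f)$, the order of growth satisfies
\[
\rho(f)=\limsup_{r\to\infty}\frac{\log T(r,f)}{\log r}\ \geq\ \limsup_{r\to\infty}\frac{\log\bigl(r-C\log r-C'\bigr)}{\log r}=1,
\]
which is the required inequality. There is no real obstacle here beyond bookkeeping; the only point worth watching is to discard any finitely many $z_0-k$ that might fall outside $|z|\le r$ when $r$ is not yet large, and to make sure the $n(0,f)\log r$ boundary term does not spoil the lower bound (it cannot, since it is nonnegative). The argument uses only the definition of length, the definition of $N(r,f)$, and the elementary inequality $T(r,f)\geq N(r,f)$.
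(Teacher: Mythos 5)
Your proposal is correct and follows essentially the same route as the paper: both translate the infinite-length hypothesis into a linear lower bound $n(r,f)\geq r+O(1)$, integrate to obtain $N(r,f)\gtrsim r$ (the paper via the standard estimate $N(r,f)\geq n(r/2,f)\log 2$, you by integrating the linear bound directly), and conclude from $T(r,f)\geq N(r,f)$ that $\rho(f)\geq 1$. The differences are only in bookkeeping.
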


\begin{proof}
We define counting function $n(r,f)$ denoting the number of poles of $f$ in the disk $|z|\leq r$. Since $z_0$ is a pole of $f$ with infinite length, there exists an $R>|z_0|$ such that $n(r,f)\geq r+O(1)$ for $r>R$. Then
    $$
    N(r,f)=\int_0^r \frac{n(t,f)}{t}\,dt\geq \int_{r/2}^{r}\frac{n(t,f)}{t}\,dt\geq n(r/2,f)\log 2
    $$
for $r\geq 2R$. Therefore,
     $$\rho(f)=\varlimsup_{r\to \infty} \frac{\log T(r,f)}{\log r}\geq \varlimsup_{r\to \infty} \frac{\log N(r,f)}{\log r}\geq 1.
     $$
\end{proof}

It is well known that $z=z_0$ is a pole of a meromorphic function $f$ with multiplicity $n$ if and only if $f$ is of
the form $f=(z-z_0)^{-n}g(z)$, where $g(z)$ is analytic at $z=z_0$ and $g(z_0)\neq 0$. The difference analogue of this result is given as follows.

\begin{theorem}\label{pole.theorem}
Let $f$ be meromorphic in the domain $G\subset\C$ and let $n\in\N$. Suppose that $z_0, z_0-1,\ldots, z_0-n\in G$.
The point $z_0$ is a pole of $f(z)$ with length $n$ in $G$ if and only if there exists a meromorphic function $g(z)$ in $G$ such that
    $$
    f(z)=(z-z_0)^{\underline{-n}}g(z),
    $$
where $g(z)$ is analytic at $z=z_0-n$ and $g(z_0-j)\neq 0$ for $j=0,1,\ldots,n-1$.
\end{theorem}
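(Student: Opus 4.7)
The plan is to prove both implications by explicit factorization, exploiting the fact that the falling factorial
$$
(z-z_0)^{\underline{-n}}=\frac{1}{(z-z_0)(z-z_0+1)\cdots(z-z_0+n-1)}
$$
has exactly $n$ simple poles, located at $z_0,z_0-1,\ldots,z_0-(n-1)$, and is analytic at $z=z_0-n$ with value $1/((-1)^n n!)\neq 0$ there. These two facts together are essentially all that is needed.

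For the forward implication, I would assume $z_0$ is a pole of $f$ with length $n$ in $G$, so that $z_0-k$ is a pole of $f$ for $0\le k<n$ and $z_0-n$ is not, and set
$$
g(z)=(z-z_0)(z-z_0+1)\cdots(z-z_0+n-1)\,f(z),
$$
which is meromorphic on $G$ and satisfies $f=(z-z_0)^{\underline{-n}}g$ by construction. At $z=z_0-n$ the polynomial prefactor equals $(-1)^n n!\neq 0$, and $f$ is analytic there by assumption, so $g$ is analytic at $z_0-n$. At each $z_0-j$ with $0\le j\le n-1$, the polynomial prefactor has a simple zero while $f$ has a pole of some multiplicity $m_j\ge 1$; hence $g$ is either analytic with a nonzero value (when $m_j=1$) or still has a pole (when $m_j\ge 2$), and in both cases $g(z_0-j)\neq 0$.

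For the reverse implication, I would start from the given factorization $f=(z-z_0)^{\underline{-n}}g$ with $g$ meromorphic on $G$, analytic at $z_0-n$, and with $g(z_0-j)\neq 0$ for $0\le j\le n-1$, and simply read off the local behavior of $f$. At each $z_0-j$ the falling factorial contributes a simple pole and $g$ has no zero, so $f$ has a pole at $z_0-j$; at $z_0-n$ both factors are analytic and $(z-z_0)^{\underline{-n}}$ does not vanish, so $f$ is analytic at $z_0-n$. By the definition stated at the end of Section~2, $z_0$ is then a pole of $f$ with length $n$.

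The only delicate point is the interpretation of ``$g(z_0-j)\neq 0$'' when $g$ itself may have a pole at $z_0-j$, which is forced by the Example where $g(z)=2^z/z$ and $g(0)=\infty$; the reading to adopt is that $g$ does not have a \emph{zero} at $z_0-j$, i.e., either $g$ is analytic and nonzero there or $g$ has a pole there. Once this convention is fixed, both directions reduce to a direct pole/zero count, and no tool beyond the local structure of $(z-z_0)^{\underline{-n}}$ is required.
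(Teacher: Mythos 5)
Your proof is correct and takes essentially the same (purely definitional) route as the paper, whose entire proof is the one-line assertion that the claim follows from the definition of length; you simply supply the explicit factorization $g(z)=(z-z_0)(z-z_0+1)\cdots(z-z_0+n-1)f(z)$ and the local pole/zero count that the paper omits. Your remark that ``$g(z_0-j)\neq 0$'' must be read as ``$g$ does not vanish at $z_0-j$'' (a pole being permitted) is the correct convention, and it is confirmed by Example 2.1 of the paper, where the cofactor $z^{\underline{-1}}2^z$ itself has a pole at the initial point.
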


\begin{proof}
From the definition of length of the pole, we get our assertion.
\end{proof}

Let $f$ be meromorphic in the domain $G\subset\C$ and let $m\in\N$. Suppose that $z_0, z_0-1,\ldots, z_0-m\in G$.
We note that if $z=z_0$ is a pole of $f$ with length $m$ in $G$, $z=z_0$ is not necessarily a pole of $\Delta f$ with length $m+1$. In order to illustrate it, we give
the following example.

\begin{example}
 Let $f(z)=z^{\underline{-2}}(2z+1)$. It is clear that $f$ has a pole at $z=0$ with length $2$, but $z=0$ is a pole of $\Delta f(z)=-2/(z^2+2z)$ with simple length.
 Let $g(z)=1/(z^2+2z)$. It shows that $g$ has a pole
 at $z=0$ with simple length. However, $z=0$ is a pole
 of $\Delta g(z)=-z^{\underline{-4}}(2z+3)$ with length $4$.
\end{example}

 \section{Difference Analogue of the Stothers-Mason theorem for entire functions}

Let $P$ be a polynomial. The radical ${\rm rad}(P)$ is the product of distinct linear factors of $P$.
Let $a$, $b$ and $c$ be relatively prime polynomials such that not all of them are identically zero. The Stothers--Mason theorem~\cite{mason1984},~\cite{stothers1981}, see also e.g.,~\cite{Snyder} states that if they satisfy $a+b=c$, then
    \begin{equation*}\label{smt}
    \max\{\deg(a),\deg(b),\deg(c)\}\leq \deg({\rm rad}(abc))-1.
    \end{equation*}

In order to state a difference analogue of the Stothers-Mason theorem,
let us recall the definition of length of zeros, which is the difference analogue of multiplicity of zeros, see \cite[Section~2]{IW2022}.
We change the title with the same meaning.
Let $f$ be analytic in a domain $G\subset\C$ and let $n\in\N$.
Suppose that $z_0, z_0+1,\ldots, z_0+n\in G$.
The point $z_0$ is called a \emph{zero of $f(z)$ with length $n$ in $G$} provided $f(z_0)$ and all difference $\Delta^k f(z_0)$ vanish for every $0\leq k<n$, but $\Delta^n f(z_0)\neq 0$. In general, the point $z_0$ is called a \emph{$a$-point of $f(z)$ with length $n$ in $G$} provided $f(z_0)-a$ and all difference $\Delta^k f(z_0)$ vanish for every $0\leq k<n$, but $\Delta^n f(z_0)\neq 0$.
In particular, if $z_0$ is an $a$-point of $f$ with height 1, then we also call $z_0$ is an $a$-point of $f$ with \emph{simple length}.

Let $P$ be a polynomial with degree $p$, and $z_1$ be a zero of $P$ with length $n_1$ and $P(z_1-1)\neq 0$. Theorem~\cite[Theorem 2.4]{IW2022} states that there exists a polynomial $P_1$ with degree $p-n_1$ such that $P(z)=(z-z_1)^{\underline{n_1}}P_1(z)$, where $P_1(z_1+n_1)\neq 0$.
Let $z_2$ be a zero of $P_1$ with length $n_2$ and $P(z_2-1)\neq 0$.
Then there exists a polynomial $P_2$ with degree $p-n_1-n_2$ such that $P_1(z)=(z-z_2)^{\underline{n_2}}P_2(z)$, where $P_2(z_2+n_2)\neq 0$.
Repeating this argument for finitely many times, we see that $P(z)$ can be written uniquely as
\begin{equation}
P(z)=A\prod_{j=1}^N(z-z_j)^{\underline{n_j}},\label{5.001}
\end{equation}
where $A$ is a nonzero constant, and $p=n_1+\cdots+n_N$.
Note that it is possible that $z_j=z_k$ even though $j\ne k$ in \eqref{5.001}.
We define the \emph{difference radical ${\rm rad}_{\Delta}(P)$} by product of these linear factors, i.e.,
\begin{equation}
{\rm rad}_{\Delta}(P)=\prod_{j=1}^N(z-z_j).\label{5.002}
\end{equation}

We denote the closed disc of radius $r>0$ centred at $z_0\in\C$ by
$\overline{D}(z_0,r):=\{z\in\C: |z-z_0|\leq r\}$. Suppose that $f$ is a meromorphic function in $\C$, and $z=z_1\in \overline{D}(0,r)$ is a $a$-point of $f$ with length $n_1$, while $z=z_1-1$ is not an $a$-point of $f$ in $\overline{D}(0,r)$ or $z_1-1\not\in\overline{D}(0,r)$. Then by Theorem~\cite[Theorem 2.4]{IW2022},
there exists a meromorphic function $f_1$ such that
    $$
    f(z)-a=(z-z_1)^{\underline{n_1}}f_1(z),
    $$
where $f_1(z_1+n_1)\neq 0$ or $z_1+n_1\not\in \overline{D}(0,r)$, and $f_1(z)$ is analytic at $z=z_1+j$ for $j=0,1,\ldots,n_1-1$. Suppose that $z=z_2\in \overline{D}(0,r)$ is a zero of $f_1$ with length $n_2$ and $f_1(z_2-1)\neq 0$ in $\overline{D}(0,r)$ or $z_2-1\not\in\overline{D}(0,r)$. Then there exists a meromorphic function $f_2$ such that
    $$
    f_1(z)=(z-z_2)^{\underline{n_2}}f_2(z),
    $$
where $f_2(z_2+n_2)\neq 0$ or $z_2+n_2\not\in\overline{D}(0,r)$, and $f_2(z)$ is analytic at $z=z_2+j$ for $j=0,1,\ldots,n_2-1$. Repeating this argument for countable many times, we have there exists a meromorphic function $F$ such that
    \begin{equation}\label{fF.eq}
    f(z)-a=\prod_{z_j\in\overline{D}(0,r)}(z-z_j)^{\underline{n_j}}F(z),
    \end{equation}
where $F(z)$ is zero free in $\overline{D}(0,r)$ and $F(z)$ is analytic at each $z=z_j$ in \eqref{fF.eq}. Note that it is possible that $z_j=z_k$ even though $j\ne k$ in \eqref{fF.eq}.
We denote the number of such points $z_j$ in the disc $\overline{D}(0,r)$ in \eqref{fF.eq} by $\overline{n}_\Delta(r,1/(f-a))$. We call such points $z_j$ are \emph{initial shifting $a$-points} of $f$ in $\overline{D}(0,r)$. For example, let
    $$
    f(z)=z^2(z-1)^3(z-2)^4e^z=z^{\underline{3}}z^{\underline{3}}(z-1)^{\underline{2}}(z-2)e^z,
    $$
we see that $\overline{n}_\Delta(r,1/f)=4$ when $r\geq 4$, and the initial shifting zeros of $f$ are $0,0,1,2$ in $\overline{D}(0,4)$. If $g(z)=\sin\pi z$, then $\overline{n}_\Delta(r,1/g)=1$ for $r\geq 1$, and the initial shifting zero of $g$ is $-[r]$, where $[r]$ denotes the integer part of $r$. It is obvious that if $f$ is not a periodic function with period one, then for $a_1,\ldots,a_q\in\C$, we have
    \begin{equation}\label{aD.eq}
    \sum_{j=1}^qn\left(r,\frac{1}{f-a_j}\right)\leq n\left(r,\frac{1}{\Delta f}\right)+\sum_{j=1}^q\overline{n}_{\Delta}\left(r,\frac{1}{f-a_j}\right)
    \end{equation}
by \cite[Corollary~2.6]{IW2022}.
In particular, if $f$ is a polynomial, then
    $$
    \overline{n}_{\Delta}\left(r,\frac{1}{f}\right)=\deg {\text{rad}_\Delta(f)}
    $$
for large $r$, where $\text{rad}_\Delta(f)$ is the difference radical of the polynomial $f$ defined in \eqref{5.002}, see \cite[Section~3]{IW2022}.
The corresponding
integrated counting function is defined in the usual way as
    $$
    \overline{N}_\Delta\left(r,\frac{1}{f-a}\right):=\int_0^r\frac{\overline{n}_\Delta\left(t,\frac{1}{f-a}\right)
    -\overline{n}_\Delta\left(0,\frac{1}{f-a}\right)}{t}\,dt
    +\overline{n}_\Delta\left(0,\frac{1}{f-a}\right)\log r.
    $$

\begin{remark}
We note here that our definition on $\overline{n}_\Delta(r,1/f)$ is not equivalent to $\tilde{n}^{[q]}_\kappa(r,1/f)$ in \cite[(5.5)]{IKLT} even when $q=1$ and $\kappa=1$ defined by
    $$
    \tilde{n}^{[1]}_1\left(r,\frac{1}{f}\right)=\sum_{w\in\overline{D}(0,r)}\left(
    {\rm ord}_w(f)-\min\{{\rm ord}_w(f),{\rm ord}_{w+1}(f)\}\right),
    $$
where ${\rm ord}_w(f)$ is the multiplicity of zeros of $f$ at $z=w$. For example, let $f=\sin\pi z$. It is known that all zeros of $f$ are integers. While $\overline{n}_\Delta(r,1/f)=1$ and $\tilde{n}^{[1]}_1(r,1/f)=0$ for any $r>0$. In fact, according to \cite[Theorem~2.4]{IW2022} and idea in \cite{IKLT}
    \begin{equation}\label{Szero.eq}
    \begin{split}
    \overline{n}_\Delta\left(r,\frac{1}{f}\right)&=\sum_{w\in\overline{D}(0,r)}\left(
    {\rm ord}_w(f)-\min\{{\rm ord}_w(f),{\rm Ord}_{w-1\in\overline{D}(0,r)}(f)\}\right)\\
    &=\sum_{\substack{|w|\leq r\\ |w-1|\leq r}}\left(
    {\rm ord}_w(f)-\min\{{\rm ord}_w(f),{\rm ord}_{w-1}(f)\}\right)+
    \sum_{\substack{|w|\leq r\\ |w-1|> r}}{\rm ord}_w(f)
    \end{split}
    \end{equation}
where ${\rm Ord}_{w-1\in\overline{D}(0,r)}(f)$ denotes the multiplicity of zeros of $f$ at $z=w-1$ when $w-1$ are in the closed disc $\overline{D}(0,r)$, otherwise ${\rm Ord}_{w-1\in\overline{D}(0,r)}(f)=0$. Obviously, we have the equality \eqref{Szero.eq} is equivalent to
    \begin{equation*}
    \begin{split}
    \overline{n}_\Delta\left(r,\frac{1}{f}\right)&=\sum_{w\in\overline{D}(0,r)}\left(
    {\rm ord}_w(f)-\min\{{\rm ord}_w(f),{\rm Ord}_{w+1\in\overline{D}(0,r)}(f)\}\right)\\
    &=\sum_{\substack{|w|\leq r\\ |w+1|\leq r}}\left(
    {\rm ord}_w(f)-\min\{{\rm ord}_w(f),{\rm ord}_{w+1}(f)\}\right)+
    \sum_{\substack{|w|\leq r\\ |w+1|> r}}{\rm ord}_w(f).
    \end{split}
    \end{equation*}
\end{remark}

Similarly, we define $\overline{n}_\Delta(r,f)$ as follows. Suppose that $f$ is a meromorphic function in $\C$, and $z=z_1\in \overline{D}(0,r)$ is a pole of $f$ with length $n_1$, while $z=z_1+1$ is not a pole of $f$ in $\overline{D}(0,r)$ or $z_1+1\not\in\overline{D}(0,r)$. Then by Theorem~\ref{pole.theorem}
there exists a meromorphic function $f_1$ such that
    $$
    f(z)=(z-z_1)^{\underline{-n_1}}f_1(z),
    $$
where $z=z_1-n_1$ is not the pole of $f_1$ in $\overline{D}(0,r)$ or $z_1-n_1\not\in\overline{D}(0,r)$, and $f_1(z)$ does not vanish at $z=z_1-j$ for $j=0,1,\ldots,n_1-1$.
Suppose that $z=z_2\in \overline{D}(0,r)$ is a pole of $f_1$ with length $n_2$ and $z_2+1$
is not a pole of $f_1(z)$ in $\overline{D}(0,r)$ or $z_2+1\not\in\overline{D}(0,r)$. Then there exists a meromorphic function $f_2$ such that
    $$
    f_1(z)=(z-z_2)^{\underline{-n_2}}f_2(z),
    $$
where $z=z_2-n_2$ is not the pole of $f_2$ in $\overline{D}(0,r)$ or $z_2-n_2\not\in\overline{D}(0,r)$, and $f_2(z)$ does not vanish at $z=z_2-j$ for $j=0,1,\ldots,n_2-1$. Repeating this argument for countable many times, we have there exists a meromorphic function $F$ such that
    \begin{equation}\label{fFpole.eq}
    f(z)=\prod_{z_j\in\overline{D}(0,r)}(z-z_j)^{\underline{-n_j}}F(z),
    \end{equation}
where $F$ is analytic in $\overline{D}(0,r)$ and $F$ does not vanish at $z=z_j$ in \eqref{fFpole.eq}. Note that it is possible that $z_j=z_k$ even though $j\ne k$ in \eqref{fFpole.eq}. We denote the number of such points $z_j$ in the disc $\overline{D}(0,r)$ in \eqref{fFpole.eq} by $\overline{n}_\Delta(r,f)$. We call such points $z_j$ \emph{initial shifting poles} of $f$ in $\overline{D}(0,r)$. For example, let
    $$
    f(z)=\frac{e^z}{z^2(z+1)^4(z+2)^3}=z^{\underline{-3}}z^{\underline{-3}}(z+1)^{\underline{-2}}(z+1)^{\underline{-1}}e^z.
    $$
We see that $\overline{n}_\Delta(r,f)=4$ for $r>3$. The initial shifting poles of $f$ are $z=0,0,-1,-1$ in $\overline{D}(0,r)$ for $r>3$. If $g=\Gamma(z)$, then $\overline{n}_\Delta(r,g)=1$ for $r>0$. It is clear that the initial shifting pole of $g$ is $z=0$ in $\overline{D}(0,r)$ for any $r>0$. Obviously, from the idea in \cite{IKLT}, we obtain that
    \begin{equation}\label{Spole.eq}
    \begin{split}
    \overline{n}_\Delta\left(r,f\right)&=\sum_{w\in\overline{D}(0,r)}\left(
    {\rm ord}^-_w(f)-\min\{{\rm ord}^-_w(f),{\rm Ord}^-_{w+1\in\overline{D}(0,r)}(f)\}\right)\\
    &=\sum_{\substack{|w|\leq r\\ |w+1|\leq r}}\left(
    {\rm ord}^-_w(f)-\min\{{\rm ord}^-_w(f),{\rm ord}^-_{w+1}(f)\}\right)+
    \sum_{\substack{|w|\leq r\\ |w+1|>r}}{\rm ord}^-_w(f),
    \end{split}
    \end{equation}
where ${\rm ord}^-_{w}(0,r)(f)$ denotes the multiplicity of poles of $f$ at $z=w$, and ${\rm Ord}^-_{w+1\in\overline{D}(0,r)}(f)$ denotes the multiplicity of poles of $f$ at $z=w+1$ when $w+1\in\overline{D}(0,r)$, if $w+1\not\in\overline{D}(0,r)$ then ${\rm Ord}^-_{w+1\in\overline{D}(0,r)}(f)=0$. The equality \eqref{Spole.eq} is equivalent to
    \begin{equation*}
    \begin{split}
    \overline{n}_\Delta\left(r,f\right)&=\sum_{w\in\overline{D}(0,r)}\left(
    {\rm ord}^-_w(f)-\min\{{\rm ord}^-_w(f),{\rm Ord}^-_{w-1\in\overline{D}(0,r)}(f)\}\right)\\
    &=\sum_{\substack{|w|\leq r\\ |w-1|\leq r}}\left(
    {\rm ord}^-_w(f)-\min\{{\rm ord}^-_w(f),{\rm ord}^-_{w-1}(f)\}\right)+
    \sum_{\substack{|w|\leq r\\ |w-1|>r}}{\rm ord}^-_w(f).
    \end{split}
    \end{equation*}
In addition, the corresponding integrated counting function is defined as
    $$
    \overline{N}_\Delta\left(r,f\right):=\int_0^r\frac{\overline{n}_\Delta\left(t,f\right)
    -\overline{n}_\Delta\left(0,f\right)}{t}\,dt
    +\overline{n}_\Delta\left(0,f\right)\log r.
    $$

\begin{lemma}\label{common.lemma}
Suppose that $f$ is a meromorphic function of order $\rho(f)<1$. Then
    $$
    N\left(r,\frac{\Delta f}{f}\right)=\overline{N}_\Delta(r,f)+\overline{N}_\Delta\left(r,\frac{1}{f}\right)+O(1).
    $$
\end{lemma}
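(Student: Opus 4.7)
The plan is to identify the pole divisor of $\Delta f/f$ explicitly through the local factorisations \eqref{fF.eq} and \eqref{fFpole.eq}, then match the resulting count against $\overline{N}_\Delta(r,f)+\overline{N}_\Delta(r,1/f)$ and absorb a boundary defect using $\rho(f)<1$.

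First, for each fixed $r>0$ I apply \eqref{fF.eq} and \eqref{fFpole.eq} in the slightly enlarged disc $\overline{D}(0,r+1)$ to write
$$f(z)=\prod_{j}(z-z_j)^{\underline{n_j}}\cdot\prod_{k}(z-w_k)^{\underline{-m_k}}\cdot F(z),$$
where $z_j$ (respectively $w_k$) are the initial shifting zeros (poles) of $f$ in that disc and $F$ is analytic and zero-free there. The identities $(u+1)^{\underline{n}}/u^{\underline{n}}=(u+1)/(u-n+1)$ and $(u+1)^{\underline{-n}}/u^{\underline{-n}}=u/(u+n)$, which follow immediately from the definitions of the falling factorial and its negative counterpart, then yield
$$\frac{f(z+1)}{f(z)}=\prod_{j}\frac{z-z_j+1}{z-z_j-n_j+1}\cdot\prod_{k}\frac{z-w_k}{z-w_k+m_k}\cdot\frac{F(z+1)}{F(z)}.$$
In $\overline{D}(0,r)$ the last factor is holomorphic and non-vanishing, and since $\Delta f/f=f(z+1)/f(z)-1$ shares the same pole divisor as $f(z+1)/f(z)$, the poles of $\Delta f/f$ there are exactly the simple poles at the rightmost points $z_j+n_j-1$ of the zero-chains and at the points $w_k-m_k$ (one step beyond the leftmost point of each pole-chain), with multiplicities summed when endpoints coincide.

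Reading off the count, $n(r,\Delta f/f)$ equals the number of zero-chains with right endpoint in $\overline{D}(0,r)$ plus the number of pole-chains with shifted endpoint $w_k-m_k$ in $\overline{D}(0,r)$, whereas $\overline{n}_\Delta(r,1/f)$ and $\overline{n}_\Delta(r,f)$ count the same chains but at the unshifted positions $z_j$ and $w_k$. The resulting discrepancy $E(r):=n(r,\Delta f/f)-\overline{n}_\Delta(r,1/f)-\overline{n}_\Delta(r,f)$ is due solely to chains whose two relevant endpoints straddle the circle $|z|=r$. Integrating and switching sum with integral, $\int_0^r E(t)/t\,dt$ collapses to a sum of terms bounded by $(n_j-1)/|z_j|$ and $m_k/|w_k|$.

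The remaining and hardest step is to show that $\sum_j(n_j-1)/|z_j|+\sum_k m_k/|w_k|<\infty$. Reorganised by the individual zeros $a_n$ and poles $b_n$ of $f$ counted with multiplicity, this becomes $\sum 1/|a_n|+\sum 1/|b_n|$. Here the hypothesis $\rho(f)<1$ enters decisively: Hadamard's factorisation theorem applied to the zero- and pole-parts of $f$ forces both to have genus $0$, which by definition guarantees that both series converge. This yields the required $O(1)$ bound and finishes the lemma.
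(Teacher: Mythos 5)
Your argument is correct and is essentially the paper's own proof in different clothing: both identify the pole divisor of $f(z+1)/f(z)$ (you via the chain factorizations \eqref{fF.eq}--\eqref{fFpole.eq} and the telescoping identities for falling factorials, the paper via the order-function identities \eqref{Szero.eq} and \eqref{Spole.eq}), match it against $\overline{n}_\Delta(r,f)+\overline{n}_\Delta(r,1/f)$ up to terms from chains straddling $|z|=r$, and absorb the integrated boundary defect into $O(1)$ using the convergence of $\sum 1/|a_n|+\sum 1/|b_n|$ guaranteed by $\rho(f)<1$. The only cosmetic imprecision is your per-chain bound $(n_j-1)/|z_j|$, which for chains passing near the origin should be replaced by $\sum_i 1/|z_j+i|$; your subsequent reorganisation by individual zeros and poles is exactly this corrected bound, so nothing is lost.
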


\begin{proof}
Since $\Delta f/f=f(z+1)/f(z)-1$, from \eqref{Szero.eq} and \eqref{Spole.eq} we have
    \begin{align*}
    n\left(r,\frac{\Delta f}{f}\right)=&\sum_{w\in\overline{D}(0,r)}\left({\rm ord}_w(f)-\min\{ {\rm ord}_w(f),{\rm ord}_{w+1}(f)\}\right)\\
    &+\sum_{w\in\overline{D}(0,r)}\left({\rm ord}^-_{w+1}(f)-\min\{ {\rm ord}^-_w(f),{\rm ord}^-_{w+1}(f)\}\right)\\
    =&\overline{n}_\Delta\left(r,\frac{1}{f}\right)-\sum_{\substack{|w|\leq r\\|w+1|>r}}\min\{ {\rm ord}_w(f),{\rm ord}_{w+1}(f)\}\\
    &+\overline{n}_\Delta(r,f)+\sum_{\substack{|w|\leq r\\|w+1|>r}}\left({\rm ord}^-_{w+1}(f)-
    \min\{ {\rm ord}^-_w(f),{\rm ord}^-_{w+1}(f)\}\right)
    -\sum_{\substack{|w|\leq r\\ |w-1|>r}}{\rm ord}^-_w(f).
    \end{align*}
By Riemann-Stieltjes integral, it follows that
    \begin{equation*}
    \begin{split}
    &N\left(r,\frac{\Delta f}{f}\right)-\overline{N}_\Delta(r,f)-\overline{N}_\Delta\left(r,\frac{1}{f}\right)\\
    =&O\left(\sum_{\substack{|a_j|\leq r\\ |a_j+1|>r}}\log\frac{r}{|a_j|}
    -\sum_{\substack{|a_j|\leq r\\ |a_j+1|>r}}\log\frac{r}{|a_j+1|}
    -\sum_{\substack{|b_j|\leq r\\ |b_j+1|>r}}\log\frac{r}{|b_j+1|}
    +\sum_{\substack{|b_j|\leq r\\ |b_j-1|>r}}\log\frac{r}{|b_j|}\right),
    \end{split}
    \end{equation*}
where $\{a_j\}$ are the sequence of zeros of $f$ and $\{b_j\}$ are the sequence of poles of $f$, with due to counting multiplicity. In addition, by using the inequality $\log(1+x)\leq x$ and $\rho(f)<1$,
    $$
    \sum_{\substack{|a_j|\leq r\\ |a_j+1|>r}}\log\frac{r}{|a_j|}
    =\sum_{\substack{|a_j|\leq r\\ |a_j+1|>r}}\log\left(\frac{r-|a_j|}{|a_j|}+1\right)
    \leq\sum_{\substack{|a_j|\leq r\\ |a_j+1|>r}}\frac{r-|a_j|}{|a_j|}
    \leq \sum_{\substack{|a_j|\leq r\\ |a_j+1|>r}}\frac{1}{|a_j|}<\infty.
    $$
Similarly, we have
    $$
    \sum_{\substack{|a_j|\leq r\\ |a_j+1|>r}}\log\frac{|a_j+1|}{r}
    \leq\sum_{\substack{|a_j|\leq r\\ |a_j+1|>r}}\frac{|a_j+1|-r}{r}
    \leq \sum_{\substack{|a_j|\leq r\\ |a_j+1|>r}}\frac{1}{|a_j|}<\infty,
    $$
    $$
    \sum_{\substack{|b_j|\leq r\\ |b_j+1|>r}}\log\frac{|b_j+1|}{r}
    \leq \sum_{\substack{|b_j|\leq r\\ |b_j+1|>r}}\frac{|b_j+1|-r}{r}
    \leq \sum_{\substack{|b_j|\leq r\\ |b_j+1|>r}}\frac{1}{|b_j|}
    <\infty
    $$
and
    $$
    \sum_{\substack{|b_j|\leq r\\ |b_j-1|>r}}\log\frac{r}{|b_j|}\leq
     \sum_{\substack{|b_j|\leq r\\ |b_j-1|>r}}\log\left(1+\frac{1}{|b_j|}\right)
     \leq  \sum_{\substack{|b_j|\leq r\\ |b_j-1|>r}}\frac{1}{|b_j|}<\infty
    $$
Therefore, we prove our assertion.
\end{proof}

Suppose that $f$ and $g$ are entire functions, and suppose that $z_1\in\C$ is a zero of $f$ with length $m$ and $z_2\in\C$ is a zero of $g$ with length $n$.
Then from Theorem \cite[Theorem~2.1]{IW2022}, there exist entire functions $F$ and $G$ such that
$f(z)=(z-z_1)^{\underline{m_1}}F(z)$ and $g(z)=(z-z_2)^{\underline{n_1}}G(z)$, where $1\leq m_1\leq m$ and $1\leq n_1\leq n$. If for some $m_1$, $n_1$,
    $$
    f(z)g(z)=(z-z_0)^{\underline{m_1+n_1}}F(z)G(z),
    $$
where $z_0$ is $z_1$ or $z_2$, then $z-z_0$ is called the \emph{common shifting divisor} of $f$ and $g$, which is the analogue of classical common divisor.
 If $f$ and $g$ do not have any nonconstant shifting common divisors, then $f$ and $g$ are called \emph{relatively shifting prime}. Obviously, if $f$ and $g$ are shifting prime, then
    $$
    \overline{N}_{\Delta}\left(r,\frac{1}{fg}\right)=\overline{N}_{\Delta}\left(r,\frac{1}{f}\right)+
    \overline{N}_{\Delta}\left(r,\frac{1}{g}\right).
    $$

 \begin{remark}\label{prime1.re}
  Suppose that $z=z_0$ is a zero of an entire function $f$ with simple length. If entire functions $f$ and $g$ do not have shifting common divisor $z-z_0$, then $z=z_0$ is a zero of $g$ with simple length or $z=z_0$ is not the zero of $g$.
 \end{remark}

  \begin{remark}\label{prime2.re}
   Suppose that $z=z_0$ is a zero of an entire function $f$ with length $m\geq 2$, and a zero of an entire function $g$ with length $n\geq 2$. Then $z-z_0$ is a shifting common divisor of $f$ and $g$.
 \end{remark}

\begin{remark}\label{commonzero.re}
 From Remarks \ref{prime1.re} and \ref{prime2.re}, we have if entire functions $f$ and $g$ are relatively shifting prime, then there exists $h$ such that $f/h$ and $g/h$
 are relatively prime entire functions, where all the zeros of $h$ are at most with simple length.
\end{remark}

Now it is possible for us to rewrite the difference analogue of the Stothers-Mason theorem \cite[Theorem~3.4]{IW2022} as follows.

\bigskip
\noindent
\textbf{Difference analogue of the Stothers-Mason theorem}.
\emph{Let $a,b$ and $c$ be relatively shifting prime polynomials such
that $a+b=c$ and such that $a,b$ and $c$ are not all constants. Then there exists $R>0$ such that for $r>R$
    \begin{equation}\label{abc.eq}
    \begin{split}
    \max\{\deg(a),\deg(b),\deg(c)\}&\leq \overline{n}_\Delta\left(r,\frac{1}{abc}\right)-1\\
    &=\overline{n}_\Delta\left(r,\frac{1}{a}\right)
    +\overline{n}_\Delta\left(r,\frac{1}{b}\right)
    +\overline{n}_\Delta\left(r,\frac{1}{c}\right)
    -1.
    \end{split}
    \end{equation}}

In the following, we proceed to generalize difference analogue of the Stothers-Mason theorem to entire functions of order $<1$. Let us recall some useful notation.

Suppose that $f$ is a nonconstant meromorphic function in the complex plane. Write $f=a/b$ as the quotient
of two entire functions $a$ and $b$ without common zeros.
It is well known that Cartan characteristic function of $f$ differs from its Nevanlinna characteristic function by a bounded term. The Cartan characteristic function, see \cite[Section 2.5.2]{Noguchi-Winkelmann}, is denoted by
    $$
    T_{f_1,f_2,\ldots,f_n}(r):=\frac{1}{2\pi}\int_0^{2\pi}\log u(re^{i\theta})\,d\theta-\log u(0),~u(re^{i\theta})=\max\{|f_1(re^{i\theta})|,\ldots,|f_n(re^{i\theta})|\},
    $$
where $f_1,\ldots,f_n$ are entire functions without common zeros. In order to study the relation on entire functions with common zeros in the section, we analogize the Cartan characteristic function and define
    $$
    \widetilde{T}_{a_1,a_2,\ldots,a_n}(r):=\frac{1}{2\pi}\int_0^{2\pi}\log u(re^{i\theta})\,d\theta-\log u(0),~u(re^{i\theta})=\max\{|a_1(re^{i\theta})|,\ldots,|a_n(re^{i\theta})|\},
    $$
where $a_1,\ldots,a_n$ are entire functions such that $u(0)\neq 0$. Suppose that $u(0)=0$, let the Laurent expansion of $a_j(z)$ at origin be
    $$
    a_j(z)=c_{\lambda_j}z^{\lambda_j}+c_{\lambda_j+1}z^{\lambda_j+1}+\cdots,\quad c_{\lambda_j}\neq 0,
    $$
where $\lambda_j=n(0,1/a_j)$ for $j=1,\ldots,n$. Hence, we define
    $$
    \widetilde{T}_{a_1,a_2,\ldots,a_n}(r):=\frac{1}{2\pi}\int_0^{2\pi}\log u(re^{i\theta})\,d\theta-\max_{1\leq j\leq n}{\log |c_{\lambda_j}|}.
    $$

\begin{remark}\label{we-de.remark}
According to the idea in \cite{GH2004}, we note that $\widetilde{T}_{a_1,a_2,\ldots,a_n}(r)$ is well-defined even if $a_1,\ldots,a_n$ has common zeros. Let us suppose that the great common division of $a_1,\ldots,a_n$ is $h$. Then $b_j=a_j/h$ have no common zeros for $j=1,\ldots,n$. Obviously,
    \begin{equation}\label{Th.eq}
    \widetilde{T}_{a_1,a_2,\ldots,a_n}(r)=T_{b_1,\ldots,b_n}(r)-\frac{1}{2\pi}\int_0^{2\pi}\log h(re^{i\theta})d\theta+O(1).
    \end{equation}
Now suppose that $h$ has a finite number of zeros on $|z|=r>0$.
Set
a curve $\gamma=\gamma(r,\delta)$ consisting of arcs of $|z|=r$ and
small indentations of radius $\delta$
about each zero of $h$ on $|z|=r$. In this case, \eqref{Th.eq} holds when the
path of integration is $\gamma$ instead of $|z| = r$. As $\delta\to 0$, on each small indentation
the integrand $\log h$ is $O(\log\delta)$, and the length of
the indentation is $O(\delta)$, and so the corresponding integrals around each indentation
tend to zero. Since the whole curve $\gamma$ approaches the circle $|z|=r$ as $\delta\to 0$, it shows us that \eqref{Th.eq} holds for all positive $r$, and $\widetilde{T}_{a_1,a_2,\ldots,a_n}(r)$ is well-defined.
\end{remark}

\begin{lemma}\label{T.lemma}
Suppose that $a$ and $b$ are entire functions, and $n_{a,b}(r)$ denotes the number of common zeros of $a$ and $b$ counting multiplicity in the closed disc $\overline{D}(0,r)$. Then
    $$
    \widetilde{T}_{a,b}(r)=T\left(r,\frac{a}{b}\right)+R_{a,b}(r)+O(1),
    $$
where $R_{a,b}(r)$ is the integrated counting function of $n_{a,b}(r)$ defined by
    $$
    R_{a,b}(r):=\int_0^r\frac{n_{a,b}(t)-n_{a,b}(0)}{t}\,dt+n_{a,b}(0)\log r.
    $$
\end{lemma}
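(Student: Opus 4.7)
The plan is to reduce to the coprime case by factoring out the greatest common divisor of $a$ and $b$. Write $h = \gcd(a,b)$ and set $\alpha = a/h$, $\beta = b/h$, so that $\alpha$ and $\beta$ are entire functions with no common zeros and $a/b = \alpha/\beta$. Since the zeros of $h$, counted with multiplicity, are precisely the common zeros of $a$ and $b$ counted with multiplicity, we have $n(r, 1/h) = n_{a,b}(r)$ and hence $N(r, 1/h) = R_{a,b}(r) + O(1)$.

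First I would apply the identity from Remark \ref{we-de.remark} to the pair $(a,b)$ to obtain
$$\widetilde{T}_{a,b}(r) = T_{\alpha,\beta}(r) + \frac{1}{2\pi}\int_0^{2\pi}\log|h(re^{i\theta})|\,d\theta + O(1),$$
the sign being fixed by the elementary identity $\max\{|a|,|b|\} = |h|\max\{|\alpha|,|\beta|\}$ together with the constant normalization built into $\widetilde{T}$. Next, by Jensen's formula applied to the entire function $h$,
$$\frac{1}{2\pi}\int_0^{2\pi}\log|h(re^{i\theta})|\,d\theta = N\!\left(r,\tfrac{1}{h}\right) + O(1) = R_{a,b}(r) + O(1).$$

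The remaining step is to identify $T_{\alpha,\beta}(r)$ with $T(r, a/b)$ modulo a bounded term. This is the standard comparison between the Cartan characteristic and the Nevanlinna characteristic for a quotient of two coprime entire functions: since $\max\{|\alpha|,|\beta|\} = |\beta|\max\{|\alpha/\beta|,1\} = |\beta|(|\alpha/\beta|^+ \cdot e)^{O(1)}$, one has
$$T_{\alpha,\beta}(r) = m\!\left(r, \tfrac{\alpha}{\beta}\right) + \frac{1}{2\pi}\int_0^{2\pi}\log|\beta(re^{i\theta})|\,d\theta + O(1),$$
and a second application of Jensen's formula to $\beta$, combined with the fact that $\alpha$ and $\beta$ share no zeros (so that the poles of $\alpha/\beta$ are exactly the zeros of $\beta$ counted with multiplicity), yields $T_{\alpha,\beta}(r) = T(r, \alpha/\beta) + O(1) = T(r, a/b) + O(1)$. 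Chaining the three displays gives the lemma.

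The only genuine obstacle is bookkeeping of the various $O(1)$ constants (arising from values at the origin, the normalization $\log u(0)$ in $\widetilde{T}$, and the Jensen constants $\log|c_\lambda|$); since the lemma is only claimed up to $O(1)$, I would absorb these contributions at each step without tracking them individually. Relatedly, if any zeros of $h$ or $\beta$ lie on $|z|=r$, I would invoke the indentation argument already described in Remark \ref{we-de.remark} to legitimize the use of Jensen's formula there.
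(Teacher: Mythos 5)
Your proof is correct, but it takes a genuinely different route from the paper's. The paper proves the lemma by a direct three-step computation: it writes $T(r,a/b)=m(r,a/b)+N(r,a/b)$, observes that the poles of $a/b$ are the zeros of $b$ minus the common zeros (so $N(r,a/b)=N(r,1/b)-R_{a,b}(r)$), applies Jensen's formula to replace $N(r,1/b)$ by $\frac{1}{2\pi}\int_0^{2\pi}\log|b(re^{i\theta})|\,d\theta$, and then uses the pointwise identity $\log^+|a/b|+\log|b|=\log\max\{|a|,|b|\}$ to recover $\widetilde{T}_{a,b}(r)$. You instead factor out $h=\gcd(a,b)$, invoke the decomposition of Remark~\ref{we-de.remark} to split off $\frac{1}{2\pi}\int\log|h|$, convert that to $R_{a,b}(r)$ by Jensen, and then appeal to the standard comparison $T_{\alpha,\beta}(r)=T(r,\alpha/\beta)+O(1)$ for the coprime pair. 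Both arguments are sound and of comparable length; yours is more modular in that it reuses the Remark and the classical Cartan--Nevanlinna comparison as black boxes (and, incidentally, you correctly use a plus sign where the paper's statement of \eqref{Th.eq} has an apparent sign typo), whereas the paper's argument is self-contained and bypasses the gcd construction entirely. The only cosmetic blemish in your write-up is the garbled expression $|\beta|\bigl(|\alpha/\beta|^{+}\cdot e\bigr)^{O(1)}$, which should simply read $\log\max\{|\alpha|,|\beta|\}=\log^{+}|\alpha/\beta|+\log|\beta|$; the surrounding text makes the intended meaning clear, so this does not affect correctness.
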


\begin{proof}
By the Jensen formula and the first main theory of Nevanlinna, it yields that
    \begin{align*}
    T\left(r,\frac{a}{b}\right)&=m\left(r,\frac{a}{b}\right)+N\left(r,\frac{a}{b}\right)\\
    &=\frac{1}{2\pi}\int_0^{2\pi}\log^+\left|\frac{a(re^{i\theta})}{b(re^{i\theta})}\right|\,d\theta+
    N\left(r,\frac{1}{b}\right)-R_{a,b}(r)\\
    &=\frac{1}{2\pi}\int_0^{2\pi}\log^+\left|\frac{a(re^{i\theta})}{b(re^{i\theta})}\right|\,d\theta+
    \frac{1}{2\pi}\int_0^{2\pi}\log|b(re^{i\theta})|\,d\theta-R_{a,b}(r)+O(1)\\
    &=\widetilde{T}_{a,b}(r)-R_{a,b}(r)+O(1).
    \end{align*}
\end{proof}
\noindent
Now by using Lemma \ref{T.lemma}, it is easy to see that \eqref{abc.eq} is written as
    $$
    \widetilde{T}_{a,b,c}(r)\leq \overline{N}_\Delta\left(r,\frac{1}{abc}\right)-\log r+O(1)
    $$
for large $r$. This inequality inspires us to generalize  Stothers-Mason theorem for transcendental entire functions.
Now let us state our result as follows.

\begin{theorem}\label{functionABC.the}
Let $a,b$ and $c$ be relatively shifting prime entire functions of order less than 1 such that
    $$
    a+b=c
    $$
and such that $a,b$ and $c$ are not all constants. Then there exists $R>0$ such that for any $\varepsilon>0$
    \begin{equation*}
    \begin{split}
    \widetilde{T}_{a,b,c}(r)&\leq \overline{N}_\Delta\left(r,\frac{1}{abc}\right)
    -(1-\delta-\varepsilon)\log r\\
    &=\overline{N}_\Delta\left(r,\frac{1}{a}\right)
    +\overline{N}_\Delta\left(r,\frac{1}{b}\right)
    +\overline{N}_\Delta\left(r,\frac{1}{c}\right)
    -(1-\delta-\varepsilon)\log r
    \end{split}
    \end{equation*}
holds for $r>R$ outside an exceptional set on $r$ with finite logarithmic measure, where $\delta=\max\{\rho(a),\rho(b),\rho(c)\}$.
\end{theorem}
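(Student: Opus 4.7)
The plan is to imitate the classical Wronskian proof of Stothers--Mason with the derivative replaced by the forward difference. Define the \emph{difference Casoratian}
\[
W(z) := a(z)\Delta b(z) - b(z)\Delta a(z) = a(z)b(z+1) - a(z+1)b(z),
\]
which is entire; using $a+b=c$ one gets the equivalent forms $W = a\Delta c - c\Delta a = c\Delta b - b\Delta c$. First I check that $W\not\equiv 0$: otherwise $b/a$ is a $1$-periodic meromorphic function of order less than $1$, hence constant, and the relatively shifting prime hypothesis together with Remarks~\ref{prime1.re}--\ref{commonzero.re} forces $a,b,c$ into a degenerate configuration (all proportional to a single entire function whose zeros all have simple length) that can be handled directly.

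For the upper estimate, the pointwise difference logarithmic derivative lemma (Chiang--Feng, Halburd--Korhonen) gives, for each entire $f$ of order $\rho(f)<1$,
\[
\left|\frac{\Delta f(z)}{f(z)}\right| = O\!\left(r^{\rho(f)-1+\varepsilon}\right)
\]
for $|z|=r$ outside a set of finite logarithmic measure. Writing $W = ab(\Delta b/b - \Delta a/a)$ and using the two analogous factorizations produces three bounds $|W|\le C|ab|r^{\delta-1+\varepsilon}$, $|W|\le C|ac|r^{\delta-1+\varepsilon}$, $|W|\le C|bc|r^{\delta-1+\varepsilon}$. Taking the minimum and invoking the elementary identity
\[
\min\{|ab|,|ac|,|bc|\} = \frac{|abc|}{\max(|a|,|b|,|c|)}
\]
yields
\[
\log|W(z)| \le \log|a(z)b(z)c(z)| - \log\max(|a(z)|,|b(z)|,|c(z)|) + (\delta-1+\varepsilon)\log r + O(1)
\]
outside the exceptional set. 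Integrating over $|z|=r$, applying Jensen's formula to the entire functions $W$ and $abc$, and recalling the definition of $\widetilde{T}_{a,b,c}(r)$ leads to the key inequality
\[
N\!\left(r,\frac{1}{W}\right) + \widetilde{T}_{a,b,c}(r) \le N\!\left(r,\frac{1}{abc}\right) + (\delta-1+\varepsilon)\log r + O(\log r).
\]

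The complementary lower bound on $N(r,1/W)$ is the combinatorial heart of the proof. At each initial shifting zero $z_0$ of $a$ with length $m$, \cite[Theorem~2.4]{IW2022} yields $a(z_0)=\cdots=a(z_0+m-1)=0$ and $a(z_0+m)\neq 0$, whence $W(z_0+k)=0$ for $k=0,\ldots,m-2$; the relatively shifting prime hypothesis, in the sharp form of Remarks~\ref{prime1.re}--\ref{commonzero.re}, prevents these vanishings from being killed by simultaneous zeros of $b$ or $c$. Summing the contribution of $m-1$ from each length-$m$ initial shifting zero of $a$, $b$, and $c$ produces
\[
N\!\left(r,\frac{1}{W}\right) \ge N\!\left(r,\frac{1}{abc}\right) - \overline{N}_\Delta\!\left(r,\frac{1}{abc}\right) + O(\log r).
\]
Substituting this into the preceding inequality and cancelling $N(r,1/(abc))$ leaves
\[
\widetilde{T}_{a,b,c}(r) \le \overline{N}_\Delta\!\left(r,\frac{1}{abc}\right) - (1-\delta-\varepsilon)\log r + O(\log r),
\]
and the trailing $O(\log r)$ is absorbed by enlarging $\varepsilon$, giving the theorem on the complement of the exceptional set inherited from the difference logarithmic derivative estimate.

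The main obstacle will be the lower bound on $N(r,1/W)$: one has to verify carefully that each length-$m$ string of zeros of any of $a,b,c$ truly contributes $m-1$ zeros to $W$ with multiplicity, even at boundary points of a string and at points that are simultaneous zeros of two of the three functions, where the relatively shifting prime hypothesis (in the precise form of Remarks~\ref{prime1.re}--\ref{commonzero.re}) must be used to rule out over-cancellation.
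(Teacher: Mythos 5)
Your route is, in substance, the paper's own proof in different packaging, and the core of it is sound. The paper normalizes by $c$, sets $f=a/c$, $g=b/c$ with $f+g=1$, writes $a/b=-(\Delta g/g)/(\Delta f/f)$, and introduces a canonical product $h$ with $N(r,1/h)\le\overline{N}_\Delta(r,1/(ab\overline{c}))$ together with the auxiliary function $k=\Delta f\,h/(fb)$; since $\Delta f/f=-W/(a\,c(z+1))$ with exactly your Casoratian $W$, this $k$ is $-Wh/(ab\,c(z+1))$, and the paper's single assertion ``$k$ is entire, hence $N(r,1/k)\ge 0$'' is precisely your combinatorial lower bound on $N(r,1/W)$. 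The analytic engine is the same (\cite[Theorem~5.1]{CF2009} for order $<1$), your observation that Remarks~\ref{prime1.re}--\ref{prime2.re} force at most one of $a,b,c$ to have $\min\{{\rm ord}_w,{\rm ord}_{w+1}\}>0$ at any given $w$ is exactly how relative shifting primeness enters, and your identity $\min\{|ab|,|ac|,|bc|\}=|abc|/\max\{|a|,|b|,|c|\}$ plays the role of the paper's reduction $\widetilde T_{a,b,c}(r)=\widetilde T_{a,b}(r)+O(1)$.

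Two steps, however, would fail as written. The serious one is your last sentence: ``the trailing $O(\log r)$ is absorbed by enlarging $\varepsilon$.'' The theorem is claimed for \emph{every} $\varepsilon>0$, so a genuine error of size $K\log r$ with fixed $K>0$ cannot be absorbed into $-(1-\delta-\varepsilon)\log r$; if your Jensen and boundary errors really were $O(\log r)$ the conclusion would not follow. You must show they are $O(1)$, and this is where the hypothesis $\rho<1$ is used quantitatively: the boundary contributions (strings of zeros straddling $|z|=r$, and the discrepancy between $c(z+1)$ and $c(z)$ created by the Casoratian) integrate to sums of the form $\sum\log\bigl(|\alpha_j+1|/|\alpha_j|\bigr)\le\sum 1/|\alpha_j|<\infty$. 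This is exactly the computation the paper carries out in Lemma~\ref{common.lemma} and in proving \eqref{overlinec.eq}, and your write-up needs the same estimates rather than an $O(\log r)$ placeholder. Secondly, the degenerate case $W\equiv 0$ must actually be closed, not waved at: there $b/a$ is $1$-periodic of order $<1$, hence constant, so $a,b,c$ are proportional to a single entire function; shifting primeness forces all its zeros to be of simple length, it cannot be zero-free (else it is constant, contradicting the hypothesis), and with at least one zero the claimed inequality is verified directly since then $\overline{N}_\Delta(r,1/(abc))=3N(r,1/a)+O(1)\ge \widetilde T_{a,b,c}(r)+2\log r+O(1)$. Neither repair requires a new idea, but both must appear explicitly for the argument to be a proof.
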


\begin{proof}
We denote a set of common zeros of $a$ and $b$ by $\{S_n\}$.
Since the common zeros of $a$ and $b$ are zeros of $c$, all the elements in $\{S_n\}$ are zeros of $c$.
We divide the set of zeros counting multiplicity of $a$ (or $b$, $c$) into two subsets, which are denoted by $\{A_n\}$ (or $\{B_n\}$, $\{C_n\}$) and $\{S_n\}$, respectively.
For example, $z_0$ is a zero of $a$ with multiplicity $3$ and a zero of $b$ with multiplicity $2$, $z_0$ belongs to both sets $\{S_n\}$ and $\{A_n\}$, while $z_0\not\in \{B_n\}$.
Note here that the set $\{S_n\}$ may be empty. Obviously, we see that there is no common elements in each two sets of $\{A_n\},\{B_n\},\{C_n\}$. From Remark~\ref{prime1.re}, we know that if $\{S_n\}$ is not empty, all the points in the set $\{S_n\}$ are zeros of $a$ or $b$, $c$ with simple length, and it is impossible that there exists an element $z\in\{A_n\}\cup\{B_n\}\cup\{C_n\}$ such that $z\pm1\in\{S_n\}$.

For any $z'\in\{A_n\}\cup\{B_n\}\cup\{C_n-1\}$, it is known that $z'$ is a zero of $a(z)b(z)c(z+1)$
denoted by $ab\overline{c}$. We construct a canonical product $h_1$ whose zeros are from $z\in\{A_n\}\cup\{B_n\}\cup\{C_n-1\}$ with multiplicity ${\rm ord}_z(ab\overline{c})-\min\{{\rm ord}_z(ab\overline{c}),{\rm ord}_{z-1}(ab\overline{c})\}$. That is,
    $$
    h_1(z)=\prod_{w\in\{A_n\}\cup\{B_n\}\cup\{C_n-1\}}\left(1-\frac{z}{w}\right)^{{\rm ord}_w(ab\overline{c})-\min\{{\rm ord}_w(ab\overline{c}),{\rm ord}_{w-1}(ab\overline{c})\}}.
    $$
In addition, we construct $h_2$ as
    $$
    h_2=\prod_{w\in\{S_n\}}\left(1-\frac{z}{w}\right).
    $$
Now let us define an entire function $h=h_1h_2$. It is clear that $\rho(h)<1$ and
    \begin{equation}\label{N.eq}
    N\left(r,\frac{1}{h}\right)\leq\overline{N}_\Delta\left(r,\frac{1}{ab\overline{c}}\right)
    \leq\overline{N}_\Delta\left(r,\frac{1}{a}\right)+
    \overline{N}_\Delta\left(r,\frac{1}{b}\right)
    +\overline{N}_\Delta\left(r,\frac{1}{\overline{c}}\right).
    \end{equation}
Since
    \begin{align*}
    \overline{n}_\Delta\left(r,\frac{1}{\overline{c}}\right)
    -\overline{n}_\Delta\left(r,\frac{1}{c}\right)
    &=\sum_{\substack{|w|\leq r\\|w+1|>r\\|w-1|\leq r}}
    \left({\rm ord}_{w}(c)-\min\{{\rm ord}_w(c),{\rm ord}_{w+1}(c)\}\right)\\
    &-\sum_{\substack{|w|\leq r\\|w+1|\leq r\\|w-1|> r}}
    \left({\rm ord}_{w}(c)-\min\{{\rm ord}_w(c),{\rm ord}_{w+1}(c)\}\right)\\
    &+\sum_{\substack{|w|\leq r\\|w+1|>r}}
    \left({\rm ord}_{w+1}(c)-{\rm ord}_w(c)\right),
    \end{align*}
it follows by Riemann-Stieltjes integral that
    $$
    \overline{N}_\Delta\left(r,\frac{1}{\overline{c}}\right)=
    \overline{N}_\Delta\left(r,\frac{1}{c}\right)+O\left(\sum_{\substack{|\alpha_j|\leq r\\|\alpha_j+1|>r}}
    \log\frac{|\alpha_j+1|}{r}+\sum_{\substack{|\alpha_j|\leq r\\|\alpha_j\pm 1|>r}}
    \log\frac{r}{|\alpha_j|}\right),
    $$
where $\{\alpha_j\}$ are zeros of $c$ counting multiplicity. It gives us that
    $$
    \sum_{\substack{|\alpha_j|\leq r\\|\alpha_j+1|>r}}
    \log\frac{|\alpha_j+1|}{r}\leq\sum_{\substack{|\alpha_j|\leq r\\|\alpha_j+1|>r}}
    \log\left(1+\frac{|\alpha_j|+1-r}{r}\right)
    \leq\sum_{\substack{|\alpha_j|\leq r\\|\alpha_j+1|>r}}
    \frac{1}{|\alpha_j|}<\infty
    $$
and
    $$
    \sum_{\substack{|\alpha_j|\leq r\\|\alpha_j\pm 1|>r}}
    \log\frac{r}{|\alpha_j|}\leq\sum_{\substack{|\alpha_j|\leq r\\|\alpha_j\pm 1|>r}}
    \log\left(1+\frac{1}{|\alpha_j|}\right)\leq\sum_{\substack{|\alpha_j|\leq r\\|\alpha_j\pm 1|>r}}
    \frac{1}{|\alpha_j|}<\infty.
    $$
Therefore, it implies that
    \begin{equation}\label{overlinec.eq}
    \overline{N}_\Delta\left(r,\frac{1}{\overline{c}}\right)
    =\overline{N}_\Delta\left(r,\frac{1}{c}\right)+O(1)
    \end{equation}
for entire function $c$ of order $\rho(c)<1$.

Let $f=a/c$ and $g=b/c$. It is obvious that $f+g=1$ and $\Delta f+\Delta g=0$.
Hence
    $$
    \frac{a}{b}=\frac{f}{g}=-\frac{\Delta g/g}{\Delta f/f}
    =-\frac{h\Delta g/g}{h\Delta f/f},
    $$
Then $(\Delta g/g)h$ and $(\Delta f/f)h$ are entire, and
    $$
    k=-\frac{\Delta gh}{ga}=\frac{\Delta fh}{fb}
    $$
is also entire. Therefore, by Jensen formula, \cite[Theorem~5.1]{CF2009}, \eqref{N.eq}-\eqref{overlinec.eq} and $a,b,c$ are relatively shifting prime, it follows that
    \begin{equation*}
    \begin{split}
    \widetilde{T}_{a,b}(r)&=\frac{1}{2\pi}\int_0^{2\pi}\log\max\{|a(re^{i\theta})|,|b(re^{i\theta})|\}\,d\theta+O(1)\\
    &=\frac{1}{2\pi}\int_0^{2\pi}\log\max\{|ka|,|kb|\}\,d\theta-\frac{1}{2\pi}\int_0^{2\pi}\log|k|\,d\theta+O(1)\\
    &=\frac{1}{2\pi}\int_0^{2\pi}\log\max\bigg\{\left|\frac{\Delta f}{f}\right|,\left|\frac{\Delta g}{g}\right|\bigg\}\,d\theta
    +\frac{1}{2\pi}\int_0^{2\pi}\log|h|\,d\theta-\frac{1}{2\pi}\int_0^{2\pi}\log|k|\,d\theta+O(1)\\
    &\leq N\left(r,\frac{1}{h}\right)-(1-\delta-\varepsilon)\log r-N\left(r,\frac{1}{k}\right)\\
    &\leq\overline{N}_\Delta\left(r,\frac{1}{a}\right)
    +\overline{N}_\Delta\left(r,\frac{1}{b}\right)
    +\overline{N}_\Delta\left(r,\frac{1}{c}\right)
    -(1-\delta-\varepsilon)\log r\\
    &= \overline{N}_\Delta\left(r,\frac{1}{abc}\right)-(1-\delta-\varepsilon)\log r\\
    \end{split}
    \end{equation*}
for all $r\not\in E$, where $E$ has finite logarithmic measure
and $\delta=\max\{\rho(a),\rho(b),\rho(c)\}$. In addition, since
    $$
    \max\{|a|,|b|\}\leq\max\{|a|,|b|,|c|\}\leq 2\max\{|a|,|b|\},
    $$
we have
    $$
    \widetilde{T}_{a,b,c}(r)=\widetilde{T}_{a,b}(r)+O(1).
    $$
Therefore, it yields our assertion.
\end{proof}

\begin{remark}
The condition ``order less than 1" in Theorem~\ref{functionABC.the} is necessary. For example, let $a=\sin\pi z$, $b=\sin\pi(z-1/2)$, and $c=\sqrt{2}\sin\pi(z-1/4)$. It is obvious that
    $$
    \widetilde{T}_{a,b,c}(r)= r+O(1)\quad\text{and}\quad \overline{N}_\Delta\left(r,\frac{1}{abc}\right)=O(\log r).
    $$
It shows that the inequality in Theorem~\ref{functionABC.the} does not hold.
\end{remark}

The following theorem extends Theorem \ref{functionABC.the} for $m+1$ entire functions of order less than 1, see \cite[Theorem~4.1]{IW2022} for the polynomial case.

\begin{theorem}\label{m+1.theroem}
Let $m$ be an integer such that $m>2$. Suppose that $f_1,\dots,f_{m+1}$ are pairwise relatively shifting prime entire functions of order $<1$ and $f_1,\dots,f_m$ are linearly independent over the filed of periodic functions with period one satisfying
    $$
    f_1+f_2+\cdots+f_m=f_{m+1}.
    $$
Then there exists $R>0$ such that for any $\varepsilon>0$
	\begin{align*}
    \widetilde{T}_{f_1,\cdots,f_{m+1}}(r)&\leq (m-1)\overline{N}_\Delta\left(r,\frac{1}{f_1\cdots f_{m+1}}\right)-\frac{m(m-1)}{2}(1-\delta-\varepsilon)\log r\\
    &=(m-1)\sum_{j=1}^{m+1}\overline{N}_\Delta\left(r,\frac{1}{f_j}\right)-\frac{m(m-1)}{2}(1-\delta-\varepsilon)\log r
    \end{align*}
holds for $r>R$ outside an exceptional set on $r$ with finite logarithmic measure, where $\delta=\max_{1\leq j\leq m+1}\{\rho(f_j)\}$.
\end{theorem}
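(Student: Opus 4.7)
The plan is to generalize the three-function argument from Theorem \ref{functionABC.the} by replacing the single-difference ratio $\Delta f/f$ with an $m \times m$ Casorati-type determinant. Normalize by setting $g_j = f_j/f_{m+1}$ for $j = 1, \dots, m$, so that $\sum_{j=1}^m g_j = 1$, and applying $\Delta^k$ for $k = 1, \dots, m-1$ gives the vanishing sums $\sum_{j=1}^m \Delta^k g_j = 0$. Define the Casorati determinant $W := \det(\Delta^{i-1} g_j)_{1 \le i, j \le m}$; the linear independence of $f_1, \dots, f_m$ over periodic functions of period one guarantees $W \not\equiv 0$ by the standard Casorati criterion. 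Expanding $W$ over permutations and dividing through by $\prod_{j=1}^m g_j$, the iterated difference logarithmic derivative lemma (e.g.\ \cite[Theorem 5.1]{CF2009}) applied to each factor $\Delta^{i-1}g_{\sigma(i)}/g_{\sigma(i)}$ with $\rho(g_{\sigma(i)}) \le \delta < 1$ yields the pointwise estimate
$$\left|\tfrac{W}{g_1 \cdots g_m}\right| \le m!\, r^{-\frac{m(m-1)}{2}(1 - \delta - \varepsilon)}, \qquad |z| = r \notin E,$$
where $E$ is an exceptional set of finite logarithmic measure and the exponent $m(m-1)/2$ arises from the sum $1 + 2 + \cdots + (m-1)$ of the orders $i-1$ appearing in the rows of $W$.

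Following the template of the three-function proof, the next step is to construct an entire canonical product $h$ of order $<1$ whose zeros absorb the poles of $W$, which are concentrated at zeros of the $f_j$'s and their forward shifts as counted by $\overline{n}_\Delta$. The pairwise relatively shifting prime hypothesis, via Remark \ref{commonzero.re}, forces common zeros of any two $f_j$'s to have simple length, which bounds the multiplicity at each such point in $h$ by $m-1$ (the number of difference rows beyond the top row), giving
$$N\!\left(r, \tfrac{1}{h}\right) \le (m-1)\,\overline{N}_\Delta\!\left(r, \tfrac{1}{f_1 \cdots f_{m+1}}\right) + O(1), \qquad \rho(h) < 1.$$
A companion entire function $k$, built as the analogue of the auxiliary $k$ in the proof of Theorem \ref{functionABC.the} — essentially $hW$ multiplied by $\prod_{j=1}^m f_j / f_{m+1}^{m-1}$ after pole cancellation — contributes a nonnegative term $N(r,1/k)$ that can be dropped on the right-hand side. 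Combining these with Jensen's formula applied to the pointwise bound above, and noting the elementary reduction $\widetilde{T}_{f_1,\dots,f_{m+1}}(r) = \widetilde{T}_{f_1,\dots,f_m}(r) + O(1)$ (valid since $|f_{m+1}| \le m\max_{j \le m}|f_j|$), produces the claimed inequality outside the exceptional set.

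The principal obstacle is the divisor bookkeeping that yields the coefficient $m-1$ and not something larger: one must carefully track, column by column in the cofactor expansion of $W$, how the operators $\Delta^{i-1}$ interact with the poles of $g_j = f_j/f_{m+1}$ at zeros of $f_{m+1}$ and with the zeros of each individual $f_j$, and verify that the pole order of $W$ at any shifting common zero is at most $m-1$. The pairwise relatively shifting prime assumption is precisely what prevents extra high-multiplicity contributions here, paralleling its role in the polynomial analogue \cite[Theorem 4.1]{IW2022}. A secondary, more routine issue is that the $\binom{m}{2}$ applications of the difference logarithmic derivative lemma each come with their own exceptional set, but their union is again of finite logarithmic measure; a third is checking that $k$ is truly entire, which boils down to matching the choice of $h$ with the poles in $W \cdot \prod f_j / f_{m+1}^{m-1}$.
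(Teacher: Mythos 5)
Your proposal assembles the right ingredients --- the Casorati determinant $G=\mathcal{C}(f_1,\dots,f_m)/(f_1\cdots f_{m+1})$ (your $W/(g_1\cdots g_m)$ is the same object up to shifted factors of $f_{m+1}$), the non-vanishing of $\mathcal{C}$ from linear independence over period-one periodic functions, the exponent $\tfrac{m(m-1)}{2}$ from iterating the difference logarithmic derivative lemma across the rows, the coefficient $m-1$ from counting the poles of $G$ at initial shifting zeros, and Remark~\ref{commonzero.re} to reduce to the case of no common zeros. All of these appear in the paper's proof. But there is a genuine gap in the final assembly: you never explain how the smallness of $W/(g_1\cdots g_m)$ controls $\widetilde{T}_{f_1,\dots,f_{m+1}}(r)=\frac{1}{2\pi}\int\log\max_j|f_j|\,d\theta$. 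In the paper this is done by the Cartan--Gundersen--Hayman device: at each $z$ one reorders so that $|f_{l_1}(z)|\leq\cdots\leq|f_{l_{m+1}}(z)|$, uses $\max_j|f_j|\leq A|f_{l_{m+1}}|$, and exploits the identity
\begin{equation*}
G=\frac{K}{f_{l_{m+1}}}\det\Bigl(\frac{f_{l_j}(z+i-1)}{f_{l_j}(z)}\Bigr),
\end{equation*}
which isolates exactly one copy of the pointwise-largest function in the denominator while every quotient in the determinant involves only the $m$ smaller functions. Integrating $\log$ of this identity and applying Jensen to $G$ is what produces $N(r,G)-N(r,1/G)-\tfrac{m(m-1)}{2}(1-\delta-\varepsilon)\log r$. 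Your proposal replaces this with ``a companion entire function $k$ \ldots\ contributes a nonnegative term $N(r,1/k)$ that can be dropped,'' in direct analogy with the three-function case.

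That analogy does not go through as stated. For $m=2$ the single function $k=-\Delta g\,h/(ga)=\Delta f\,h/(fb)$ exists precisely because $\Delta f=-\Delta g$, so the two representations $ka=-h\Delta g/g$ and $kb=h\Delta f/f$ express \emph{both} competitors for the maximum as $h/k$ times a small quotient. For $m\geq 3$ the relation $\sum_j\Delta^k g_j=0$ gives no single meromorphic $k$ with $kf_j=(\text{small})\cdot h$ simultaneously for all $j$; the object you write down, $hW\prod_j f_j/f_{m+1}^{m-1}$, is neither obviously entire nor obviously related to $\max_j|f_j|$. Without the pointwise reordering (or Cramer's rule applied column-by-column together with it), the chain from the determinant estimate to the characteristic function does not close: knowing that $W/(g_1\cdots g_m)$ is small says nothing by itself about which $f_j$ is large at a given point. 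So you should insert the Cartan ordering step explicitly; once it is in place, the rest of your bookkeeping (the $(m-1)$ pole count, the union of the $\binom{m}{2}$ exceptional sets, and the reduction $\widetilde{T}_{f_1,\dots,f_{m+1}}=\widetilde{T}_{f_1,\dots,f_m}+O(1)$) matches the paper's argument.
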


\begin{proof}
Suppose that entire functions $f_1,f_2,\ldots,f_{m+1}$ have no common zeros.
Since $f_1,\ldots,f_{m+1}$ are linearly independent over the filed of periodic functions with period one, it is obvious that $f_1,\ldots,f_{m}$ are linearly independent over $\C$. For each $z\in \C$, we arrange the moduli of the function values $|f_j(z)|$ in weakly decreasing order, that is,
    $$
    \abs[f_{l_1}(z)]\leq \abs[f_{l_2}(z)]\leq \cdots \leq \abs[f_{l_{m+1}}(z)],
    $$
where the integers $l_1,l_2,\ldots,l_{m+1}$ depend on $z$. According to \cite{Cartan} or \cite[Lemma 8.2]{GH2004}, there exists a positive constant $A$ that does not depend on $z$, such that
    $$
    \abs[f_j(z)]\leq A\abs[f_{l_{m+1}}(z)]\quad\text{whenever}\quad 1\leq j\leq m+1,
    $$
and $f_{l_{m+1}}$ does not vanish at $z$. Using the fact that $f_1+f_2+\cdots+f_m=f_{m+1}$, we have
    \begin{equation}\label{T.eq}
    \widetilde{T}_{f_1,\ldots,f_{m+1}}(r)    \leq\frac{1}{2\pi}\int_{0}^{2\pi}\log \abs[f_{l_{m+1}}(re^{i\theta})]\,d\theta+O(1),
    \end{equation}
where $l_{m+1}$ depends on $z$.
 Moreover, $f_{l_1},\dots,f_{l_{m+1}}$ are linear combinations of $f_1,\dots,f_m$. Then there exists a constant $K$ that depends on $z$ with upper bound $M$, such that
    $$
    \mathcal{C}(f_1,\dots,f_m)=K\mathcal{C}(f_{l_1},\dots,f_{l_m}),
    $$
where ${\mathcal C}(f_1,f_2,\dots,f_m)(z)$ is Casorati determinant, see e.g., \cite[Pages 354--357]{M-Thomson1933},~\cite[Pages 276--281]{Norlund1924}, defined by
\begin{eqnarray*}
{\mathcal C}(z)={\mathcal C}(f_1,f_2,\dots,f_m)(z)=
\left|
\begin{array}{cccc}
f_1(z) & f_2(z) & \cdots & f_m(z) \\[5pt]
f_1(z+1) &  f_2(z+1) & \cdots &  f_m(z+1) \\[5pt]
 \vdots & \vdots& \ddots & \vdots \\[5pt]
f_1(z+m-1) & f_2(z+m-1) & \cdots & f_m(z+m-1)
\end{array}
\right|.
\end{eqnarray*}
Denote a non-constant meromorphic function by
    $$
    G=\frac{\mathcal{C}(f_1,\dots,f_m)}{f_1\cdots f_{m+1}}.
    $$
Therefore, we have
	\begin{equation}\label{C.eq}
    G=\frac{K\mathcal{C}(f_{l_1},\dots,f_{l_m})}{f_{l_1}\cdots f_{l_{m+1}}}
	=\frac{K}{f_{l_{m+1}}}
    \begin{vmatrix}
    1 & 1 & \cdots & 1 \\
    \frac{f_{l_1}(z+1)}{f_{l_1}(z)} & \frac{f_{l_2}(z+1)}{f_{l_2}(z)} & \cdots & \frac{f_{l_{m}}(z+1)}{f_{l_{m}}(z)} \\
    \vdots & \vdots &  \ddots & \vdots \\
    \frac{f_{l_1}(z+m-1)}{f_{l_1}(z)} & \frac{f_{l_2}(z+m-1)}{f_{l_2}(z)} & \cdots & \frac{f_{l_m}(z+m-1)}{f_{l_m}(z)}
    \end{vmatrix}.
	\end{equation}
Keeping in mind that $f_1,\ldots,f_{m+1}$ are relatively shifting prime entire functions of order less than 1, from Lemma \ref{common.lemma} and proof of \eqref{overlinec.eq}, we have
    \begin{equation}\label{Nm-1.eq}
    N(r,G)\leq (m-1)\sum_{j=1}^{m+1}\overline{N}_\Delta\left(r,\frac{1}{f_j}\right)+O(1)
    = (m-1)\overline{N}_\Delta\left(r,\frac{1}{f_1\cdots f_{m+1}}\right)+O(1).
    \end{equation}
Hence, by Jensen formula, \cite[Theorem~5.1]{CF2009}, and \eqref{T.eq}-\eqref{Nm-1.eq}, it yields that for any $\varepsilon>0$
	\begin{equation}\label{Tf.eq}
    \begin{split}
	\widetilde{T}_{f_1,\ldots,f_{m+1}}(r)
&\leq\frac{1}{2\pi}\int_{0}^{2\pi}\log \abs[f_{l_{m+1}}]\,d\theta+O(1)\\
		&\leq N(r,G)-N\left(r,\frac{1}{G}\right)-\frac{m(m-1)}{2}(1-\delta-\varepsilon)\log r\\
		&\leq (m-1)\overline{N}_\Delta\left(r,\frac{1}{f_1\cdots f_{m+1}}\right)-\frac{m(m-1)}{2}(1-\delta-\varepsilon)\log r
	\end{split}
    \end{equation}
holds for $r>R$ outside an exceptional set on $r$ with finite logarithmic measure, where $\delta=\max_{1\leq j\leq m+1}\{\rho(f_j)\}$.

Suppose that entire functions $f_1,f_2,\ldots,f_{m+1}$ have common zeros. From Remark~\ref{commonzero.re}, there exists an entire function $h$ of order $\rho(h)<1$ with simple length zeros such that $g_j=f_j/h$ are entire without common zeros for $j=1,\ldots,m+1$. Substituting $f_j$ to \eqref{Tf.eq} and using Jensen's formula, we have
    \begin{align*}
    \widetilde{T}_{f_1,\ldots,f_{m+1}}(r)&=\widetilde{T}_{g_1,\ldots,g_{m+1}}(r)+\widetilde{T}_{h}(r)\\
    &\leq (m-1)\overline{N}_\Delta\left(r,\frac{1}{g_1\cdots g_{m+1}}\right)-\frac{m(m-1)}{2}(1-\delta'-\varepsilon)\log r+N\left(r,\frac{1}{h}\right)\\
    &=(m-1)\overline{N}_\Delta\left(r,\frac{1}{g_1\cdots g_{m+1}}\right)-\frac{m(m-1)}{2}(1-\delta'-\varepsilon)\log r+\overline{N}_\Delta\left(r,\frac{1}{h}\right)\\
    &\leq (m-1)\overline{N}_\Delta\left(r,\frac{1}{f_1\cdots f_{m+1}}\right)-\frac{m(m-1)}{2}(1-\delta-\varepsilon)\log r
    \end{align*}
holds for $r>R$ outside an exceptional set on $r$ with finite logarithmic measure, where $\delta'=\max_{1\leq j\leq m+1}\{\rho(g_j)\}\leq\delta$. Therefore, we prove the assertion.
\end{proof}

\section{Difference analogue of truncated version of Nevanlinna second main theorem}

Nevanlinna second main theorem which is a deep generalization of Picard's theorem implies that a non-constant meromorphic function cannot have too many points with high multiplicity. In this section, we proceed to show that a subnormal meromorphic function such that $\Delta f(z)\not\equiv 0$ cannot have too many points with long length. The following theorems in this section heavily depend on the difference analogue of the lemma on the logarithmic derivative. The initial assumption is finite order meromorphic functions, see \cite{CF2008} and \cite{HK2006}. The best condition so far is for meromorphic functions $f$ satisfying
    \begin{equation}\label{ZK.eq}
    \limsup_{r\to\infty}\frac{\log T(r,f)}{r}=0,
    \end{equation}
see \cite{ZK}. If a meromorphic function $f$ satisfies \eqref{ZK.eq}, then $f$ is called \emph{subnormal}.
Now let us state the difference analogue of truncated version of Nevanlinna second main theorem.

\begin{theorem}\label{Second.theorem}
Suppose that $f$ is a subnormal meromorphic function such that $\Delta f\not\equiv 0$. Let $q\geq 2$, and let $a_1,\ldots,a_q$ be distinct constants. Then
	\begin{equation}\label{diff-SMT}
		(q-1)T(r,f)\leq \overline{N}_\Delta(r,f)+\sum_{k=1}^q\overline{N}_\Delta\left(r,\frac{1}{f-a_k}\right)+S(r,f),
	\end{equation}
where the exceptional set $E$ associated with $S(r,f)$ is of zero upper density. An upper density of a set $E$ in $[1,\infty)$ is defined as
    $$
    \overline{\text{dens}}E:=\limsup_{r\to\infty}\frac{\int_{E\cup[1,r]}\,dt}{r}.
    $$
\end{theorem}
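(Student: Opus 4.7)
The plan is to adapt the classical proof of Nevanlinna's second main theorem, with $f'$ replaced by $\Delta f$, multiplicity replaced by length, and the logarithmic derivative lemma replaced by its difference analogue for subnormal meromorphic functions (Zheng--Korhonen, \cite{ZK}). First I would apply the standard pointwise estimate
$$\sum_{k=1}^q \log^+\left|\frac{1}{f(z)-a_k}\right|\le \log^+|F(z)|+O(1),\qquad F:=\sum_{k=1}^q\frac{1}{f-a_k},$$
valid because the $a_k$ are distinct, so at most one $|f-a_k|$ can be small at any point. Integrating yields $\sum_k m(r,1/(f-a_k))\le m(r,F)+O(1)$. Writing $F=(1/\Delta f)\sum_k \Delta(f-a_k)/(f-a_k)$ and applying the difference logarithmic derivative lemma termwise gives $m(r,F)\le m(r,1/\Delta f)+S(r,f)$. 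Jensen's formula rewrites $m(r,1/\Delta f)$ as $T(r,\Delta f)-N(r,1/\Delta f)+O(1)$, and the first main theorem gives $\sum_k[m(r,1/(f-a_k))+N(r,1/(f-a_k))]=qT(r,f)+O(1)$. Combining yields
$$qT(r,f)\le T(r,\Delta f)+\sum_{k=1}^q N\!\left(r,\frac{1}{f-a_k}\right)-N\!\left(r,\frac{1}{\Delta f}\right)+S(r,f).$$

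The proof now rests on two counting-function estimates. The first, essentially equation~\eqref{aD.eq} integrated over $r$, reads
$$\sum_{k=1}^q N(r,1/(f-a_k))-N(r,1/\Delta f)\le \sum_{k=1}^q \overline{N}_\Delta(r,1/(f-a_k))+O(\log r).$$
The second is the new key inequality
$$T(r,\Delta f)\le T(r,f)+\overline{N}_\Delta(r,f)+S(r,f).$$
The proximity part $m(r,\Delta f)\le m(r,f)+S(r,f)$ is immediate from the difference logarithmic derivative lemma applied to $\Delta f/f$. The counting part rests on a pole-by-pole computation: if $m_w$ denotes the pole multiplicity of $f$ at $w$, then $\Delta f$ has a pole at $z_0$ of multiplicity at most $\max(m_{z_0},m_{z_0+1})$, so $n(r,\Delta f)-n(r,f)\le\sum_{z_0\in \overline{D}(0,r)}(m_{z_0+1}-m_{z_0})^+$. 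The telescoping identity $\sum (m_{z_0+1}-m_{z_0})=0$ (taken over all of $\mathbb{C}$) turns this into $\sum(m_{z_0}-m_{z_0+1})^+$, which by \eqref{Spole.eq} is $\overline{n}_\Delta(r,f)$ modulo boundary terms over points with $|z_0|\le r<|z_0\pm1|$. After integration, those boundary contributions are absorbed into $S(r,f)$ by the same estimates already employed in the proof of Lemma~\ref{common.lemma}.

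Assembling the three ingredients produces
$$qT(r,f)\le T(r,f)+\overline{N}_\Delta(r,f)+\sum_{k=1}^q\overline{N}_\Delta(r,1/(f-a_k))+S(r,f),$$
and subtracting $T(r,f)$ yields the theorem. The main obstacle will be the rigorous control of the boundary corrections in the identity $N(r,\Delta f)=N(r,f)+\overline{N}_\Delta(r,f)+S(r,f)$ for a merely subnormal $f$: one needs to verify that the sums over $|z_0|\le r$, $|z_0+1|>r$ can be absorbed into an $S(r,f)$ term whose exceptional set has zero upper density, thereby extending Lemma~\ref{common.lemma} beyond the order-less-than-one setting. A secondary point is to check that the Zheng--Korhonen form of the remainder composes consistently with the integrated version of \eqref{aD.eq} and with Jensen's formula applied to $1/\Delta f$.
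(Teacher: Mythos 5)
Your argument is correct and is essentially the paper's proof with its cited ingredient unpacked: the paper simply invokes the Halburd--Korhonen/Zheng--Korhonen difference second main theorem (Lemma~\ref{HK.lemma}) where you re-derive it via $F=\sum_k 1/(f-a_k)$ and the difference logarithmic derivative lemma, and both arguments then conclude with the same two counting estimates, namely the integrated form of \eqref{aD.eq} and $N(r,\Delta f)\le N(r,f)+\overline{N}_\Delta(r,f)$ (equivalently your $T(r,\Delta f)\le T(r,f)+\overline{N}_\Delta(r,f)+S(r,f)$). The boundary corrections you flag as the ``main obstacle'' are genuine but standard --- they are the usual discrepancies between $N(r,f(z+1))$ and $N(r,f)$ already controlled for subnormal functions in Zheng--Korhonen --- whereas the paper asserts the pole inequality without comment, so your treatment is, if anything, the more careful of the two on this point.
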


\begin{proof} In order to prove Theorem \ref{Second.theorem}, let us recall
the difference analogue of Nevanlinna second main theorem at first.

\begin{lemma}\cite{HK2006, ZK}\label{HK.lemma}
Let $c\in\C$, and let $f$ be a subnormal meromorphic function such that $\Delta f\not\equiv 0$. Let $q\geq 2$, and
let $a_1,\ldots,a_q$ be distinct constants. Then
	$$
		m(r,f)+\sum_{k=1}^qm\left(r,\frac{1}{f-a_k}\right)\leq 2T(r,f)-N_{\text{pair}}(r,f)+S(r,f),
	$$
where
	$$
	N_{\text{pair}}(r,f):=2N(r,f)-N(r,\Delta f)+N\left(r,\frac{1}{\Delta f}\right)
	$$
and the exceptional set associated with $S(r,f)$ is of zero upper density.
\end{lemma}
It is known that by Theorem \ref{pole.theorem}
    $$
    N(r,\Delta f)\leq N(r,f)+\overline{N}_\Delta(r,f)
    $$
Therefore, our result follows from Lemma \ref{HK.lemma}, \eqref{aD.eq} and Nevanlinna first main theorem.
\end{proof}

\begin{remark}
The difference analogue of Cartan's version of Nevanlinna second main theorem was given by Ishizaki et al. in \cite[Theorem~5.7]{IKLT}. It is known that Nevanlinna second main theorem
is a particular example of Cartan's version of Nevanlinna second main theorem, see \cite[Page 447]{GH2004}. The authors cannot give any proof that Theorem \ref{Second.theorem} is a corollary of
\cite[Theorem~5.7]{IKLT}.
\end{remark}

Suppose that $z_n$ are initial shifting $a$-points of $f$ with length $m_n\geq 2$. Then $z=a$ is called \emph{complete long value} of $f$. For example, let $f=z^{\underline{2}}$, $0$ is a complete long value of $f$, but for any $z\neq 0$ is not. Therefore, we get the following result from Theorem \ref{Second.theorem}.

\begin{theorem}
If $f$ is a subnormal meromorphic function such that $\Delta f\not\equiv 0$, then $f$ has at most four complete long values. In particular, if $f$ is a subnormal entire function such that $\Delta f\not\equiv 0$, then $f$ has at most two complete long values.
\end{theorem}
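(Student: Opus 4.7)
The plan is to combine Theorem~\ref{Second.theorem} with the structural bound $\overline{N}_\Delta(r,1/(f-a))\le \tfrac12 N(r,1/(f-a))+S(r,f)$ that should hold whenever $a$ is a complete long value of a subnormal $f$. Suppose $a_1,\ldots,a_q$ are complete long values of $f$. In the factorization \eqref{fF.eq} for $f-a_k$ in $\overline{D}(0,r)$, every exponent $n_j$ is at least $2$, so each factor $(z-z_j)^{\underline{n_j}}$ contributes zeros of $f-a_k$ at both $z_j$ and $z_j+1$ (each simple from this factor) provided $z_j+1\in\overline{D}(0,r)$. Summing over the factors gives
\begin{equation*}
n\!\left(r,\tfrac{1}{f-a_k}\right)\;\ge\;2\,\overline{n}_\Delta\!\left(r,\tfrac{1}{f-a_k}\right)-B_k(r),
\end{equation*}
where $B_k(r)$ counts those initial shifting $a_k$-points $z_j$ with $|z_j|\le r<|z_j+1|$, i.e.\ those lying in the thin annular strip $r-1<|z|\le r$.

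Integrating and handling $\tfrac12\int_0^r B_k(t)/t\,dt$ via the same $\log(1+x)\le x$ telescoping used in the proof of Lemma~\ref{common.lemma} (each such $z_j$ contributes at most $\log(|z_j+1|/|z_j|)\le 1/|z_j|$ or $\log(r/|z_j|)\le 1/|z_j|$ at the boundary), and absorbing $\sum_{|z_j|\le r}1/|z_j|$ into $S(r,f)$ using the subnormal growth, one obtains
\begin{equation*}
\overline{N}_\Delta\!\left(r,\tfrac{1}{f-a_k}\right)\;\le\;\tfrac12\,N\!\left(r,\tfrac{1}{f-a_k}\right)+S(r,f)\;\le\;\tfrac12\,T(r,f)+S(r,f),
\end{equation*}
the final step by the Nevanlinna first main theorem.

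Plugging these estimates into Theorem~\ref{Second.theorem}, together with $\overline{N}_\Delta(r,f)\le N(r,f)\le T(r,f)+O(1)$, yields
\begin{equation*}
(q-1)\,T(r,f)\;\le\;T(r,f)+\tfrac{q}{2}\,T(r,f)+S(r,f),
\end{equation*}
i.e.\ $\bigl(\tfrac{q}{2}-2\bigr)T(r,f)\le S(r,f)$. Since the $S(r,f)$-exceptional set has zero upper density and $T(r,f)\to\infty$, this forces $q\le 4$. When $f$ is entire, $\overline{N}_\Delta(r,f)=0$, and the same computation collapses to $\bigl(\tfrac{q}{2}-1\bigr)T(r,f)\le S(r,f)$, giving $q\le 2$.

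The principal obstacle is justifying the bound $\overline{N}_\Delta\le\tfrac12 N+S(r,f)$ in the subnormal (rather than order-less-than-$1$) setting: in Lemma~\ref{common.lemma} the assumption $\rho(f)<1$ was used precisely to make $\sum 1/|z_j|$ converge outright, but for a genuinely subnormal function of order $\ge 1$ this sum may diverge. One must therefore control the boundary integral by the annular telescoping $B_k(t)\le n(t,1/(f-a_k))-n(t-1,1/(f-a_k))$, bounding $\int_0^r B_k(t)/t\,dt$ against $\int_0^r n(t,1/(f-a_k))/t^2\,dt$, and invoking $\log T(r,f)=o(r)$ to ensure the resulting error is of type $S(r,f)$ outside an exceptional set of zero upper density.
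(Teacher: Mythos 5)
Your argument is correct and follows essentially the same route as the paper: establish $\overline{N}_\Delta\left(r,\tfrac{1}{f-a_k}\right)\le\tfrac12 N\left(r,\tfrac{1}{f-a_k}\right)+S(r,f)$ for each complete long value $a_k$ and feed this, together with $\overline{N}_\Delta(r,f)\le N(r,f)\le T(r,f)+O(1)$, into Theorem~\ref{Second.theorem} to get $q\le 4$ (and $q\le 2$ in the entire case). The only divergence is that the paper simply asserts the key inequality (``it is known that'') with an $O(1)$ error term, whereas you explicitly isolate and sketch a treatment of the boundary-annulus correction $B_k(r)$, which is indeed the one step that genuinely requires care in the subnormal (rather than order less than $1$) setting.
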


\begin{proof}
Suppose that $a_n$ are complete long values of a subnormal meromorphic function $f$, where $n\in\N^+$. It is known that there exists $R>0$ such that
    $$
    \overline{N}_\Delta\left(r,\frac{1}{f-a_n}\right)\leq \frac{1}{2}
    N\left(r,\frac{1}{f-a_n}\right)\leq\frac{1}{2}
    T\left(r,f\right)+O(1)
    $$
for $r>R$, where $n\in\N^+$. In addition, using Theorem \ref{Second.theorem}, we have
    \begin{equation*}
    \begin{split}
    (n-1)T(r,f)&\leq \overline{N}_\Delta(r,f)+\sum_{k=1}^n\overline{N}_\Delta\left(r,\frac{1}{f-a_k}\right)+S(r,f)\\
    &\leq T(r,f)+\frac{n}{2}T(r,f)+S(r,f),
    \end{split}
    \end{equation*}
where the exceptional set associated with $S(r,f)$ is of zero upper density. Hence, we have $n\leq 4$.
In addition, if $f$ is a subnormal entire function such that $\Delta f\not\equiv 0$, then
    $$
    (n-1)T(r,f)\leq \sum_{k=1}^n\overline{N}_\Delta\left(r,\frac{1}{f-a_k}\right)+S(r,f)
    \leq \frac{n}{2}T(r,f)+S(r,f),
    $$
where the exceptional set associated with $S(r,f)$ is of zero upper density. Hence, we have $n\leq 2$. We prove our assertion.
\end{proof}

Nevanlinna value distribution theory is concerned with the density of
points where a meromorphic function takes a certain value in the complex plane, such as the quantity $\theta(a,f)$ is the index of multiplicity of value $a$. A difference analogue of the index of multiplicity $\theta(a,f)$ is called \emph{index of height of value $a$}, which is defined as
    $$
    \theta_\Delta(a,f):=\liminf_{r\to\infty}\frac{N\left(r,\frac{1}{f-a}\right)
    -\overline{N}_\Delta\left(r,\frac{1}{f-a}\right)}{T(r,f)},
    $$
where $a\in\C$ or $a=\infty$. Obviously, $\theta_\Delta(a,f)$ is positive only if there are relatively many $a$-point of a subnormal meromorphic function $f$ with long length.

The following corollary reveals that a subnormal meromorphic function such that $\Delta f(z)\not\equiv 0$ cannot have too many $a$-points with long length.

\begin{corollary}
Let $f$ be a subnormal meromorphic function such that $\Delta f\not\equiv 0$. Then we have $\theta_\Delta(a,f)=0$ except for at most countably many values.
\end{corollary}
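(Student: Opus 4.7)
The plan is to mimic the classical defect relation argument $\sum_a\delta(a,f)\le 2$: derive from Theorem~\ref{Second.theorem} an inequality of the form $\theta_\Delta(\infty,f)+\sum_{k=1}^q\theta_\Delta(a_k,f)\le 2$ valid for every finite family of distinct values $a_1,\ldots,a_q\in\C$, then conclude countability from the uniformity in the choice of the $a_k$.

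Fix distinct $a_1,\ldots,a_q\in\C$ and apply Theorem~\ref{Second.theorem}. Rewriting each truncated counting function as $\overline{N}_\Delta(r,\cdot)=N(r,\cdot)-\bigl(N(r,\cdot)-\overline{N}_\Delta(r,\cdot)\bigr)$ and invoking the first main theorem to bound $N(r,f)\le T(r,f)$ and $N(r,1/(f-a_k))\le T(r,f)+O(1)$, the second main inequality rearranges into
\begin{equation*}
\bigl(N(r,f)-\overline{N}_\Delta(r,f)\bigr)+\sum_{k=1}^q\Bigl(N\!\left(r,\tfrac{1}{f-a_k}\right)-\overline{N}_\Delta\!\left(r,\tfrac{1}{f-a_k}\right)\Bigr)\le 2T(r,f)+S(r,f)+O(1)
\end{equation*}
for $r$ outside the exceptional set $E$ of zero upper density supplied by Theorem~\ref{Second.theorem}. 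Each bracketed difference is nonnegative since, in the factorizations \eqref{fF.eq} and \eqref{fFpole.eq}, every block $(z-z_j)^{\underline{\pm n_j}}$ contributes $n_j$ to the ordinary count $n(r,\cdot)$ but only $1$ to the initial-shifting count $\overline{n}_\Delta(r,\cdot)$.

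Dividing by $T(r,f)$ yields, for $r\notin E$, a bound $\alpha_\infty(r)+\sum_{k=1}^q\alpha_k(r)\le 2+o(1)$ with $\alpha_\infty,\alpha_k\ge 0$. Because $E$ has zero upper density the complement $[1,\infty)\setminus E$ is unbounded, and restricting the infimum to $r\notin E$ can only enlarge the $\liminf$, so
\begin{equation*}
\theta_\Delta(a_k,f)=\liminf_{r\to\infty}\alpha_k(r)\le\liminf_{\substack{r\to\infty\\ r\notin E}}\alpha_k(r),
\end{equation*}
and similarly for $\theta_\Delta(\infty,f)$. Applying the superadditivity $\sum\liminf\le\liminf\sum$ (valid for nonnegative summands) gives $\theta_\Delta(\infty,f)+\sum_{k=1}^q\theta_\Delta(a_k,f)\le 2$. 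Since this bound is uniform in the number and choice of the $a_k$, only finitely many values of $a\in\C\cup\{\infty\}$ can satisfy $\theta_\Delta(a,f)>1/m$ for each $m\in\N$, so $\{a:\theta_\Delta(a,f)>0\}$ is a countable union of finite sets.

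The main obstacle I anticipate is the interaction between the two liminfs: $\theta_\Delta(a,f)$ is defined by an unrestricted $\liminf$, while Theorem~\ref{Second.theorem} only controls $S(r,f)=o(T(r,f))$ off $E$. This is resolved precisely by the observation that passing to the smaller set $\{r\notin E\}$ can only enlarge the liminf of a nonnegative function, so any bound obtained for the restricted liminf transfers to the unrestricted one. Apart from this point and the elementary verification that $n\ge\overline{n}_\Delta$ from the factorizations \eqref{fF.eq}--\eqref{fFpole.eq}, the rest is the standard rearrangement of a second main theorem into defect-relation form.
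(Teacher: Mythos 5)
Your proof is correct and is exactly the standard defect-relation argument this corollary is meant to follow from: the paper states the corollary without proof, and your write-up supplies the intended derivation, correctly handling the two points that actually need checking --- the nonnegativity of $N-\overline{N}_\Delta$ coming from the factorizations \eqref{fF.eq} and \eqref{fFpole.eq}, and the fact that restricting the $\liminf$ to the complement of the zero-upper-density exceptional set only enlarges it, so the unrestricted $\liminf$ in the definition of $\theta_\Delta$ is still controlled. Nothing further is needed.
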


It is well known that any polynomial is determined by its zero except for a constant factor, but it is not true for transcendental
entire or meromorphic functions. Nevanlinna's five value theorem which is an important application of Nevanlinna second main theorem, says that if two non-constant meromorphic functions share five values ignoring multiplicity then these functions
must be identical. By considering periodic functions instead of constants, and by ignoring length instead of multiplicity, we obtain a difference analogue of the five value theorem as follows.

We say $f$ and $g$ \emph{shifting share $a$ point} in the closed disc $\overline{D}(0,r)$, if $f$ and $g$ have the same initial shifting $a$-point in the closed disc $\overline{D}(0,r)$. We note that it is possible that $a=\infty$. For example, let $f=z^{\underline{3}}=z(z-1)(z-2)$ and $g=z$. Then $f$ and $g$ shifting share $0$ in $\overline{D}(0,r)$ for $r>3$. While $h_1=z$ and $h_2=z^2$ do not shifting share $0$ in any disc. Because the initial shifting zero of $h_1$ is 0 with multiplicity 2 and of $h_2$ with multiplicity 1. If $f$ and $g$ shifting share $a$-point in $\overline{D}(0,r)$ for any $r>0$, then we say $f$ and $g$ shifting share $a$-point in the complex plane.

\begin{theorem}
Let $f$ and $g$ be subnormal meromorphic functions such that $\Delta f(z)\not\equiv 0$. If there are five distinct values $a_k$ in the extended complex plane, such that $f$ and $g$ shifting share $a_k$ in the complex plane, for $k=1,\ldots,5$. Then $f\equiv g$.
\end{theorem}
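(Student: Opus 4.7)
The plan is to mimic Nevanlinna's classical proof of the five-value theorem, substituting Theorem \ref{Second.theorem} for the ordinary second main theorem and exploiting the shift-sharing hypothesis to bound the sum of truncated counting functions. Assume for contradiction that $f \not\equiv g$, set $F := f - g \not\equiv 0$, and first treat the case where all five shared values $a_1,\ldots,a_5$ are finite; the case $a_5 = \infty$ will be handled at the end.

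The heart of the argument is a pointwise estimate. By formula \eqref{Szero.eq} the contribution of $z_0 \in \overline{D}(0,r)$ to $\overline{n}_\Delta(r,1/(f-a_k))$ equals $\max\bigl(0, \operatorname{ord}_{z_0}(f-a_k) - \operatorname{ord}_{z_0-1}(f-a_k)\bigr)$, and shift-sharing of $a_k$ equates this with the same quantity for $g$. Hence it is bounded above by both $\operatorname{ord}_{z_0}(f-a_k)$ and $\operatorname{ord}_{z_0}(g-a_k)$, and in particular by $\operatorname{ord}_{z_0}(F)$, since at a common zero of $f-a_k$ and $g-a_k$ the difference $F$ vanishes to order at least the smaller of the two. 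Summing over $z_0$ and over $k$ (the relevant $z_0$'s being disjoint for distinct $k$, as the $a_k$ are distinct values of $f$) and integrating produces
\[
\sum_{k=1}^{5}\overline{N}_\Delta\!\left(r,\frac{1}{f-a_k}\right) \leq N\!\left(r,\frac{1}{F}\right) + O(1) \leq T(r,f) + T(r,g) + O(1).
\]

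Feeding this, together with the trivial bound $\overline{N}_\Delta(r,f) \leq T(r,f) + O(1)$, into Theorem \ref{Second.theorem} with $q = 5$ yields $2T(r,f) \leq T(r,g) + S(r,f)$. A symmetric argument applied to $g$ --- which I may assume satisfies $\Delta g \not\equiv 0$, else the degenerate case below applies --- produces $2T(r,g) \leq T(r,f) + S(r,g)$, and summing the two inequalities gives the contradiction $T(r,f) + T(r,g) \leq S(r,f) + S(r,g)$, since $T \to \infty$ while $S = o(T)$ off the exceptional sets of zero upper density. The degenerate case $\Delta g \equiv 0$ (so that $g$ is $1$-periodic) is ruled out because every $a_k$-value of $g$ then lies on an infinite chain, hence $g$ has no initial shifting $a_k$-point of finite length, and shift-sharing then forces $\overline{N}_\Delta(r,1/(f-a_k)) = 0$ for each $k$; Theorem \ref{Second.theorem} for $f$ thus reads $3T(r,f) \leq S(r,f)$, impossible since $\Delta f \not\equiv 0$ forces $f$ nonconstant.

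The main obstacle I expect to encounter is the case $a_5 = \infty$, which I would handle by introducing $G := 1/f - 1/g$: an entirely analogous pointwise argument based on formula \eqref{Spole.eq} and shift-sharing of $\infty$ yields $\overline{N}_\Delta(r,f) \leq N(r,1/G) + O(1) \leq T(r,f) + T(r,g) + O(1)$, which is to be combined with the previous estimate for the four finite shared values and Theorem \ref{Second.theorem} with $q = 4$. The subtlety is that a Möbius reduction does not preserve the shift-sharing hypothesis, because the pole-chain convention for initial shifting poles runs from right to left on the integer lattice, opposite to the zero-chain convention for initial shifting zeros; hence both $F$ and $G$ must be carried through the argument. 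Closing the resulting inequalities at full strength requires a combined bound on $T(r,F) + T(r,G)$ sharper than the naive $2\bigl(T(r,f) + T(r,g)\bigr)$, which I expect to extract from the multiplicative identity $F = -fg\,G$ via Jensen's formula to control the cross-term.
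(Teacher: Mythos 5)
Your treatment of the case where all five shared values are finite is essentially the paper's own proof: the same application of Theorem \ref{Second.theorem}, the same observation that a shared initial shifting $a_k$-point contributes at most $\min\{{\rm ord}_{z_0}(f-a_k),{\rm ord}_{z_0}(g-a_k)\}\leq {\rm ord}_{z_0}(f-g)$ so that $\sum_{k}\overline{N}_\Delta(r,1/(f-a_k))\leq N(r,1/(f-g))\leq T(r,f)+T(r,g)+O(1)$, and the same arithmetic (the paper adds the two second-main-theorem inequalities before invoking $N(r,1/(f-g))$, you do it per function; the endpoint $T(r,f)+T(r,g)\leq S(r,f)+S(r,g)$ is identical). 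Your separate treatment of $\Delta g\equiv 0$ addresses a hypothesis the paper silently needs (it applies Theorem \ref{Second.theorem} to $g$ as well), though your claim that shifting sharing then forces $\overline{N}_\Delta(r,1/(f-a_k))=0$ is too strong: for a $1$-periodic $g$ the initial shifting $a_k$-points sit at the left edge of each chain inside $\overline{D}(0,r)$ (cf.\ the example $g=\sin\pi z$ in Section 3), so this counting function counts the distinct chains and need not vanish; you would still have to argue it is $S(r,f)$.

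The one genuine gap is the case $a_5=\infty$, and you have diagnosed it correctly: initial shifting poles are the right-hand ends of pole chains while initial shifting zeros are the left-hand ends, so a M\"obius reduction does not preserve shifting sharing, and running $F=f-g$ and $G=1/f-1/g$ separately only yields $3T(r,f)\leq 2(T(r,f)+T(r,g))+S(r,f)$, which does not close. Your proposed repair via $F=-fg\,G$ is stated as a hope rather than carried out, so the proposal is incomplete here --- although in fairness the paper's own proof simply writes $\sum_{k=1}^5\overline{N}_\Delta(r,1/(f-a_k))\leq N(r,1/(f-g))$ for all five values, which fails at shared poles, so the published argument has the same hole. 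The gap can be closed without $G$ by the standard accounting: at a common pole $z_0$ the sharing hypothesis bounds the contribution to $\overline{n}_\Delta(r,f)$ by $\mu(z_0):=\min\{{\rm ord}^-_{z_0}(f),{\rm ord}^-_{z_0}(g)\}$, while ${\rm ord}^-_{z_0}(f-g)\leq{\rm ord}^-_{z_0}(f)+{\rm ord}^-_{z_0}(g)-\mu(z_0)$; integrating gives $\overline{N}_\Delta(r,f)+N(r,F)\leq N(r,f)+N(r,g)+O(1)$, and combining this with $N(r,1/F)\leq m(r,F)+N(r,F)+O(1)$ and $m(r,F)\leq m(r,f)+m(r,g)+O(1)$ yields $\overline{N}_\Delta(r,f)+\sum_{k=1}^{4}\overline{N}_\Delta(r,1/(f-a_k))\leq T(r,f)+T(r,g)+O(1)$, after which Theorem \ref{Second.theorem} with $q=4$ gives $2T(r,f)\leq T(r,g)+S(r,f)$ exactly as in the finite case.
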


\begin{proof}
Suppose that $f\not\equiv g$. Then by Theorem \ref{Second.theorem},
    $$
    3T(r,f)\leq \sum_{k=1}^5\overline{N}_\Delta\left(r,\frac{1}{f-a_k}\right)+S(r,f)
    $$
and
    $$
    3T(r,g)\leq \sum_{k=1}^5\overline{N}_\Delta\left(r,\frac{1}{g-a_k}\right)+S(r,g)
    $$
where the exceptional set associated with $S(r,f)$ is of zero upper density.
Since $f$ and $g$ shifting share $a_k$ in the complex plane for all $k=1,2,\ldots,5$, it implies that
    $$
    \overline{N}_\Delta\left(r,\frac{1}{f-a_k}\right)=\overline{N}_\Delta\left(r,\frac{1}{g-a_k}\right)
    $$
for all $k=1,2,\ldots,5$. Therefore,
    \begin{align*}
    3(T(r,f)+T(r,g))&\leq \sum_{k=1}^5\overline{N}_\Delta\left(r,\frac{1}{f-a_k}\right)+
    \sum_{k=1}^5\overline{N}_\Delta\left(r,\frac{1}{g-a_k}\right)+
    S(r,f)+S(r,g)\\
    &\leq 2N\left(r,\frac{1}{f-g}\right)+S(r,f)+S(r,g)\\
    &\leq 2T\left(r,\frac{1}{f-g}\right)+S(r,f)+S(r,g)\\
    &\leq 2\left(T(r,f)+T(r,g)\right)+S(r,f)+S(r,g)
    \end{align*}
From inequalities above, we have
    $$
    T(r,f)+T(r,g)\leq S(r,f)+S(r,g),
    $$
which is a contradiction. There $f=g+\alpha$, where $\alpha\in\C$. Our assertion $f\equiv g$ follows by $f$ and $g$ have at least one common initial shifting $a_j$-point for $j=1,\ldots,5$.
\end{proof}

\begin{remark}
The condition $k=5$ is sharp. For example, $f=e^z$ and $f=e^{-z}$ shifting share $0,1,-1,\infty$ in the complex plane. However, $f\not\equiv g$.
\end{remark}

\section{Entire solutions of difference Fermat functional equations}

In this section, we apply Theorem \ref{functionABC.the} and Theorem \ref{m+1.theroem} to difference Fermat
type functional equations for investigating nonexistence of entire solutions of order less than 1. We adopt the falling expression $f^{\underline{n}}:=f(z)f(z-1)\ldots f(z-n+1)$ instead of $f^n$ for an entire function $f$ in the Fermat type functional equations.
For the Fermat type functional equations, see e.g., \cite{GH2004}.
Concerning the methods of the proofs, we follow the idea in~\cite{IKLT} and \cite{IW2022}.

\begin{theorem}\label{Fermat.theorem}

Let $a$, $b$ and $c$ be entire functions of order less than 1, not all constants, and $n\in\N^+$ such
that $a^{\underline{n}}$, $b^{\underline{n}}$ and $c^{\underline{n}}$ are relatively shifting prime and satisfy
    $$
    a^{\underline{n}}+b^{\underline{n}}=c^{\underline{n}}.
    $$
Then $n\leq 2$. In addition, if one of $a$, $b$ and $c$ is a constant, then $n=1$.

\end{theorem}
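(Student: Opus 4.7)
The plan is to apply Theorem~\ref{functionABC.the} to the triple $A = a^{\underline n}$, $B = b^{\underline n}$, $C = c^{\underline n}$. These are entire functions of common maximum order $\delta := \max\{\rho(a),\rho(b),\rho(c)\} < 1$; they are relatively shifting prime by hypothesis, they satisfy $A + B = C$, and they are not all constant (since for entire $a$ of order $<1$, the Hadamard factorization forces $a^{\underline n}$ to be constant precisely when $a$ is). Theorem~\ref{functionABC.the} then yields, for any $\varepsilon>0$,
\begin{equation}\label{plan-main}
\widetilde T_{A,B,C}(r)\leq \overline N_\Delta\!\left(r,\frac{1}{A}\right)+\overline N_\Delta\!\left(r,\frac{1}{B}\right)+\overline N_\Delta\!\left(r,\frac{1}{C}\right)-(1-\delta-\varepsilon)\log r
\end{equation}
for $r>R$ outside an exceptional set of finite logarithmic measure.

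The first step is to bound the left side of~\eqref{plan-main} from below by $n\,M(r)+O(1)$, where $M(r):=\max\{N(r,1/a),N(r,1/b),N(r,1/c)\}$. For any $G\in\{A,B,C\}$, Jensen's formula gives $\frac{1}{2\pi}\int_0^{2\pi}\log|G(re^{i\theta})|\,d\theta=N(r,1/G)+O(1)$, and since $\log\max\{|A|,|B|,|C|\}\geq \log|G|$ pointwise, one obtains $\widetilde T_{A,B,C}(r)\geq N(r,1/G)+O(1)$. The product structure $a^{\underline n}(z)=a(z)a(z-1)\cdots a(z-n+1)$, together with the summability $\sum 1/|\alpha_j|<\infty$ for the zero sequence $\{\alpha_j\}$ of the order-$<1$ function $a$ (the same device used in the proof of~\eqref{overlinec.eq}), gives $N(r,1/a(z-k))=N(r,1/a)+O(1)$ for each $k$ and hence $N(r,1/a^{\underline n})=n\,N(r,1/a)+O(1)$; analogously for $b,c$. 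Therefore $\widetilde T_{A,B,C}(r)\geq n\,M(r)+O(1)$.

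The second step is to bound the right side of~\eqref{plan-main} from above by $3\,M(r)+O(1)-(1-\delta-\varepsilon)\log r$. Paralleling~\eqref{Szero.eq}, the initial shifting blocks of $a^{\underline n}$ are captured by the identity $\overline n_\Delta(r,1/a^{\underline n})=\sum_{|z|\leq r}\max\{0,\,m(a,z)-m(a,z-n)\}\leq n(r,1/a)$, where $m(a,\cdot)$ is the multiplicity of zero of $a$. The boundary discrepancies integrate to $O(1)$ rather than $O(\log r)$ by the same summability argument, yielding $\overline N_\Delta(r,1/a^{\underline n})\leq N(r,1/a)+O(1)$ and similarly for $b,c$. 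Summing and using $N(r,1/a)+N(r,1/b)+N(r,1/c)\leq 3\,M(r)$ completes the bound.

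Substituting both bounds into~\eqref{plan-main} gives
$$(n-3)\,M(r)\leq O(1)-(1-\delta-\varepsilon)\log r$$
for $r\to\infty$ outside the exceptional set. Since at least one of $a,b,c$ is non-constant and entire of order $<1$, it has at least one zero, so $M(r)\geq 0$ and does not vanish identically. For $n\geq 3$ the left side is non-negative while the right side tends to $-\infty$, a contradiction; hence $n\leq 2$. When in addition one of $a,b,c$ is constant, its counting function is identically zero, so $3M(r)$ is replaced by $2M(r)$, and the same argument forces $n\leq 1$, i.e., $n=1$. The principal technical point will be securing both error estimates---$N(r,1/a^{\underline n})=n\,N(r,1/a)+O(1)$ and $\overline N_\Delta(r,1/a^{\underline n})\leq N(r,1/a)+O(1)$---with uniformly bounded rather than $O(\log r)$ errors, because only then does the $-(1-\delta-\varepsilon)\log r$ term in~\eqref{plan-main} remain strong enough to exclude $n=3$.
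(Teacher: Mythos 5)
Your proposal is correct and follows essentially the same route as the paper: apply Theorem~\ref{functionABC.the} to the triple $a^{\underline{n}},b^{\underline{n}},c^{\underline{n}}$, establish $N(r,1/a^{\underline{n}})=nN(r,1/a)+O(1)$ and $\overline{N}_\Delta(r,1/a^{\underline{n}})\le N(r,1/a)+O(1)$ with bounded (not $O(\log r)$) errors via the summability $\sum 1/|\alpha_j|<\infty$ for zeros of an order-$<1$ function, and then derive a contradiction for $n\ge 3$ (resp.\ $n\ge 2$ when one function is constant). The only cosmetic differences are that the paper adds the three resulting inequalities instead of working with the maximum $M(r)$, and your pointwise expression for $\overline{n}_\Delta(r,1/a^{\underline{n}})$ should involve $\max\{0,\,m(a,z)-m(a,z+n)\}$ rather than $m(a,z)-m(a,z-n)$ (the telescoping of $\mathrm{ord}_w-\mathrm{ord}_{w+1}$ runs the other way), which does not affect the bound by $n(r,1/a)$.
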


\begin{proof}
Let us first assume that $a$, $b$ and $c$ are all nonconstant entire functions of order less than 1. By Jensen formula, we have
    $$
    \widetilde{T}_{a^{\underline{n}}}(r)=\widetilde{T}_{a}(r)
    +\cdots+\widetilde{T}_{a(z-n+1)}(r)=N\left(r,\frac{1}{a}\right)+\cdots+
    N\left(r,\frac{1}{a(z-n+1)}\right)+O(1).
    $$
Suppose that $\{z_n\}$ are the zeros of $a$. Since there exists $R>0$ such that
    $$
    N\left(r,\frac{1}{a}\right)=\sum_{\substack{0<|a_j|\leq r\\ a_j\neq -1}}\log\frac{r}{|a_j|}+n\left(0,\frac{1}{a}\right)\log r+n\left(0,\frac{1}{a(z+1)}\right)\log r
    $$
and
    $$
    N\left(r,\frac{1}{a(z+1)}\right)=\sum_{\substack{0<|a_j-1|\leq r\\ a_j\neq 0}}\log\frac{r}{|a_j-1|}+n\left(0,\frac{1}{a(z+1)}\right)\log r+n\left(0,\frac{1}{a}\right)\log r
    $$
holds for $r>R$, by using the inequality $\log(1+x)\leq x$ for $x\geq 0$ we have
    \begin{equation*}
    \begin{split}
    N\left(r,\frac{1}{a(z+1)}\right)-N\left(r,\frac{1}{a}\right)
    &=\sum_{\substack{0<|a_j|\leq r\\0<|a_j-1|\leq r}}\left(\log\frac{r}{|a_j-1|}-\log\frac{r}{|a_j|}\right)\\
    &+\sum_{\substack{0<|a_j-1|\leq r\\|a_j|> r}}\log\frac{r}{|a_j-1|}
    -\sum_{\substack{0<|a_j|\leq r\\|a_j-1|> r}}\log\frac{r}{|a_j|}\\
    &=\sum_{\substack{0<|a_j|\leq r\\0<|a_j-1|\leq r}}\left(\log\frac{|a_j|}{|a_j-1|}\right)+O(1)\\
    &\leq\sum_{0<|a_j-1|\leq r}\frac{1}{|a_j-1|}+O(1)=O(1)
    \end{split}
    \end{equation*}
for $r>R$ when $\rho(a)<1$. Moreover, we have
    \begin{equation*}
    \begin{split}
    \overline{n}_\Delta\left(r,\frac{1}{a^{\underline{n}}}\right)&=\sum_{w\in\overline{D}(0,r)}
    \left({\rm ord}_w(a^{\underline{n}})-\min\{{\rm ord}_w(a^{\underline{n}}),{\rm Ord}_{w+1\in\overline{D}(0,r)}(a^{\underline{n}})\}\right)\\
    &=\sum_{w\in\overline{D}(0,r)}
    \left({\rm ord}_w(a^{\underline{n}})-\min\{{\rm ord}_w(a^{\underline{n}}),{\rm Ord}_{w+1\in\overline{D}(0,r)}(a^{\underline{n}})\}\right)\\
        &=\sum_{w\in\overline{D}(0,r)}
    \left(\sum_{j=0}^{n-1}{\rm ord}_{w-j}(a)-
    \min\left\{\sum_{j=0}^{n-1}{\rm ord}_{w-j}(a),\sum_{j=1}^n{\rm Ord}_{w-j\in\overline{D}(0,r)}(a)\right\}\right)\\
    &\leq \sum_{\substack{w\in\overline{D}(0,r)\\w-k\in\overline{D}(0,r) \\k\in\{1,\ldots,n\}}}
    {\rm ord}_{w}(a)
    +\sum_{\substack{w-k+1\in\overline{D}(0,r)\\w-k\not\in\overline{D}(0,r) \\k\in\{1,\ldots,n\}}}
    {\rm ord}_{w}(a)\\
    &\leq n\left(r,\frac{1}{a}\right)+\sum_{\substack{|w|\leq r\\w-k+1\in\overline{D}(0,r)\\w-k\not\in\overline{D}(0,r) \\k\in\{2,\ldots,n\}}}
    {\rm ord}_{w}(a).
    \end{split}
    \end{equation*}
It follows by Riemann-Stieltjes integral that if $\rho(a)<1$, then
    \begin{equation}\label{nN.eq}
    \overline{N}_\Delta\left(r,\frac{1}{a^{\underline{n}}}\right)\leq N\left(r,\frac{1}{a}\right)+O(1)
    \end{equation}
Hence, by \eqref{nN.eq} and Theorem~\ref{functionABC.the}, it yields that for any $\varepsilon>0$
    \begin{equation}\label{a.eq}
    \begin{split}
    nN\left(r,\frac{1}{a}\right)&=\widetilde{T}_{a^{\underline{n}}}(r)+O(1)\leq
    \widetilde{T}_{a^{\underline{n}},b^{\underline{n}},c^{\underline{n}}}(r)-(1-\delta-\varepsilon)\log r+O(1)\\
    &\leq \overline{N}_\Delta\left(r,\frac{1}{a^{\underline{n}}b^{\underline{n}}c^{\underline{n}}}\right)
    -(1-\delta-\varepsilon)\log r\\
    &=\overline{N}_\Delta\left(r,\frac{1}{a^{\underline{n}}}\right)
    +\overline{N}_\Delta\left(r,\frac{1}{b^{\underline{n}}}\right)
    +\overline{N}_\Delta\left(r,\frac{1}{c^{\underline{n}}}\right)
    -(1-\delta-\varepsilon)\log r\\
    &\leq N\left(r,\frac{1}{a}\right)+N\left(r,\frac{1}{b}\right)+N\left(r,\frac{1}{c}\right)-(1-\delta-\varepsilon)\log r
    \end{split}
    \end{equation}
holds for $r>R$ outside an exceptional set on $r$ with finite logarithmic measure, where $\delta=\max\{\rho(a),\rho(b),\rho(c)\}$.
Similarly, for $b$ and $c$, we have for any $\varepsilon>0$
    \begin{equation}\label{b.eq}
    nN\left(r,\frac{1}{b}\right)\leq N\left(r,\frac{1}{a}\right)+N\left(r,\frac{1}{b}\right)+N\left(r,\frac{1}{c}\right)-(1-\delta-\varepsilon)\log r
    \end{equation}
and
    \begin{equation}\label{c.eq}
    nN\left(r,\frac{1}{c}\right)\leq N\left(r,\frac{1}{a}\right)+N\left(r,\frac{1}{b}\right)+N\left(r,\frac{1}{c}\right)-(1-\delta-\varepsilon)\log r
    \end{equation}
hold for $r>R$ outside an exceptional set on $r$ with finite logarithmic measure.
Combining inequalities \eqref{a.eq}, \eqref{b.eq} and \eqref{c.eq}, we obtain
there exists $R>0$ such that for any $\varepsilon>0$
    $$
    (n-3)\left(N\left(r,\frac{1}{a}\right)+N\left(r,\frac{1}{b}\right)+N\left(r,\frac{1}{c}\right)\right)\leq -3(1-\delta-\varepsilon)\log r
    $$
holds for $r>R$ outside an exceptional set on $r$ with finite logarithmic measure, which implies that $n\leq 2$.

We secondly assume one of $a$, $b$ and $c$ is a constant. Without loss of generality, we assume that $c$ is a constant. Then \eqref{a.eq} and \eqref{b.eq} yield there exists $R>0$ such that for any $\varepsilon>0$
    $$
    (n-2)\left(N\left(r,\frac{1}{a}\right)+N\left(r,\frac{1}{b}\right)\right)\leq-2(1-\delta-\varepsilon)\log r
    $$
holds for $r>R$ outside an exceptional set on $r$ with finite logarithmic measure,
which shows that $n\leq 1$. We proved our assertion.
\end{proof}

The next result extends Theorem \ref{Fermat.theorem} to equations with arbitrarily many terms.

\begin{theorem}\label{Equn.eq}
Let $m\in \N$, $m\geq 2$, $n\in\N$. Suppose that there exist $f_1,\ldots,f_{m+1}$ nonconstant entire functions of order less than 1 satisfying
\begin{equation*}
    f_1^{\underline{n}}+f_2^{\underline{n}}+\cdots
    +f_m^{\underline{n}}=f_{m+1}^{\underline{n}}.
\end{equation*}
Further suppose that $f_1^{\underline{n}}$, $\ldots$, $f_{m+1}^{\underline{n}}$ are pairwise relatively shifting prime and $f_1^{\underline{n}},\ldots,f_{m}^{\underline{n}}$ are linearly independent.
Then
$$
n\leq m^2-2.
$$
\end{theorem}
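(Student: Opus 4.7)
The plan is to mirror the proof of Theorem~\ref{Fermat.theorem}, substituting Theorem~\ref{m+1.theroem} in place of Theorem~\ref{functionABC.the}. Put $F_j := f_j^{\underline{n}}$ for $j = 1, \ldots, m+1$. The $F_j$ are entire of order less than $1$, pairwise relatively shifting prime by hypothesis, satisfy $F_1 + \cdots + F_m = F_{m+1}$, and $F_1,\ldots,F_m$ are linearly independent over the field of one-periodic functions. For $m > 2$ Theorem~\ref{m+1.theroem} applies to them directly; for the boundary case $m = 2$ I would invoke Theorem~\ref{functionABC.the} instead, which has the same coefficient structure. Either way, outside an exceptional set of finite logarithmic measure,
\begin{equation*}
\widetilde{T}_{F_1,\ldots,F_{m+1}}(r) \leq (m-1)\sum_{j=1}^{m+1}\overline{N}_\Delta\!\left(r, \frac{1}{F_j}\right) - \frac{m(m-1)}{2}(1-\delta-\varepsilon)\log r,
\end{equation*}
with $\delta = \max_{1\leq j\leq m+1}\rho(f_j) < 1$.

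Next I would peel off a bound on each individual $N(r,1/f_k)$ using three ingredients already developed inside the proof of Theorem~\ref{Fermat.theorem}. First, from $|F_k|\le\max_j|F_j|$ one gets $\widetilde{T}_{F_k}(r)\le \widetilde{T}_{F_1,\ldots,F_{m+1}}(r)+O(1)$. Second, Jensen's formula applied to the entire function $F_k$, together with the shift-invariance estimate $N(r,1/f_k(z-j)) = N(r,1/f_k) + O(1)$ valid for $\rho(f_k)<1$ (proved via Riemann--Stieltjes and $\log(1+x)\leq x$ exactly as in the proof of Theorem~\ref{Fermat.theorem}), gives $\widetilde{T}_{F_k}(r) = nN(r,1/f_k) + O(1)$. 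Third, the estimate \eqref{nN.eq} yields $\overline{N}_\Delta(r,1/F_j) \leq N(r,1/f_j) + O(1)$. Combining these three, for each $k$,
\begin{equation*}
nN\!\left(r, \frac{1}{f_k}\right) \leq (m-1)\sum_{j=1}^{m+1} N\!\left(r, \frac{1}{f_j}\right) - \frac{m(m-1)}{2}(1-\delta-\varepsilon)\log r + O(1)
\end{equation*}
outside the exceptional set.

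Summing these $m+1$ inequalities over $k$ and rearranging produces
\begin{equation*}
\bigl(n-(m^2-1)\bigr)\sum_{k=1}^{m+1} N\!\left(r,\frac{1}{f_k}\right) \leq -\frac{(m+1)m(m-1)}{2}(1-\delta-\varepsilon)\log r + O(1).
\end{equation*}
Any nonconstant entire function of order less than $1$ must have at least one zero (otherwise it would be $e^{g}$ for some nonconstant entire $g$, which forces $\rho\geq 1$), so $\sum_{k} N(r,1/f_k) \to \infty$. If $n \geq m^2-1$ the left-hand side is non-negative for large $r$ while the right-hand side tends to $-\infty$, a contradiction. Hence $n\leq m^2-2$.

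The principal technical obstacle is the clean propagation of $O(1)$ errors through the shift-by-$j$ comparisons inside the falling factorial: I must verify that combining the shift-invariance of $N(r,1/f_k(z-j))$ with the truncation bound \eqref{nN.eq} produces coefficients distinguishing exactly $n$ on the left versus $(m-1)(m+1)$ on the right after summation, with no hidden dependence on $r$ eating into the decisive $-\log r$ term. A secondary nuisance is that the $m+1$ exceptional sets must be unioned and shown to still have finite logarithmic measure, which is automatic but worth flagging. Once this bookkeeping is in place the contradiction is routine.
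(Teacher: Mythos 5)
Your proposal is correct and follows exactly the route the paper intends: the paper's own proof of this theorem is just the one-line remark ``By using Theorem~\ref{m+1.theroem}, we have our assertion,'' and your argument supplies precisely the missing details by transplanting the bookkeeping from the proof of Theorem~\ref{Fermat.theorem} (the identities $\widetilde{T}_{f_k^{\underline{n}}}(r)=nN(r,1/f_k)+O(1)$ and the truncation bound \eqref{nN.eq}) into the $(m+1)$-term inequality, then summing over $k$ to get $n\leq m^2-2$. The handling of the $m=2$ case via Theorem~\ref{functionABC.the} and the union of exceptional sets are both sound.
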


\begin{proof}
By using Theorem \ref{m+1.theroem}, we have our assertion.
\end{proof}

%

\bigskip

\medskip
\noindent
\emph{R.-C. Chen}\\
\textsc{Shantou University, Department of Mathematics,\\
Daxue Road No.~243, Shantou 515063, China}\\
\texttt{email:21rcchen@stu.edu.cn}

\medskip
\noindent
\emph{Z.-T.~Wen}\\
\textsc{Shantou University, Department of Mathematics,\\
Daxue Road No.~243, Shantou 515063, China}\\
\texttt{e-mail:zhtwen@stu.edu.cn}

\vspace{1cm}
\end{document}